\newtheorem{theorem}{Theorem}[section]
\newtheorem{lemma}[theorem]{Lemma}
\newtheorem{remark}[theorem]{Remark}
\theoremstyle{definition}
\numberwithin{equation}{section}
\newcommand{\bc}{\begin{center}}
\newcommand{\ec}{\end{center}}
\newcommand{\be}{\begin{eqnarray}}
\newcommand{\ee}{\end{eqnarray}}
\newcommand{\ben}{\begin{eqnarray*}}
\newcommand{\een}{\end{eqnarray*}}
\def\hK{\hat{K}}
\def\cT{\mathcal{T}}
\def\ad#1{\begin{aligned}#1\end{aligned}}  
\def\a#1{\begin{align*}#1\end{align*}} \def\an#1{\begin{align}#1\end{align}}
\def\e#1{\begin{equation}#1\end{equation}} 
  \numberwithin{equation}{section}
\def\boxit#1{\vbox{\hrule height1pt \hbox{\vrule width1pt\kern1pt
     #1\kern1pt\vrule width1pt}\hrule height1pt }}
 \def\lab#1{\boxit{\small #1}\label{#1}} \def\mref#1{\boxit{\small #1}\ref{#1}}
 \def\meqref#1{\boxit{\small #1}\eqref{#1}}
  \def\lab#1{\label{#1}} \def\mref#1{\ref{#1}} \def\meqref#1{\eqref{#1}}
\long\def\comments#1{}
\title[]
{Nonconforming finite element methods on quadrilateral meshes}
\author[J.~Hu]{Jun Hu}
\address{LMAM and  School of Mathematical Sciences,
  Peking University,  Beijing 100871, P. R. China. hujun@math.pku.edu.cn}
\author {Shangyou Zhang}
\address{Department of Mathematical Sciences, University of Delaware,
    Newark, DE 19716, USA.  szhang@udel.edu }
\thanks{The first author was supported by  the NSFC Project 11271035, and  in part by the NSFC Key Project 11031006.}
\keywords{Nonconforming finite element, rectangle.
\\ AMS Subject Classification: 65N30,  65N15, 35J25}
\begin{document}
\begin{abstract}
It is well-known that  it is  comparatively difficult to design nonconforming finite elements on quadrilateral meshes by using
Gauss-Legendre points  on each  edge of triangulations. One reason lies in that these degrees of freedom
associated to these  Gauss-Legendre points are not all linearly independent for usual expected polynomial spaces,
which explains why only  several lower order nonconforming quadrilateral finite elements can be found in literature.
The present paper  proposes two  families of nonconforming finite elements of any odd order and one family of
nonconforming finite elements of any even order on quadrilateral meshes. Degrees of freedom are given for these elements, which are
proved to be well-defined for their corresponding shape function spaces in a unifying way.  These elements generalize
three lower order nonconforming finite elements on quadrilaterals to any order. In addition, these nonconforming
 finite element spaces are shown to be full spaces which is somehow not discussed for nonconforming finite elements
 in literature before.
\end{abstract}
\maketitle

\section{Introduction}
Because of their  flexibility and stability when compared with conforming finite element methods, nonconforming
finite element methods have become very important and effective discretization methods for numerically
solving, among others,  high order elliptic problems, Stokes-like problems and  Reissner-Mindlin plate bending problems.

Quadrilateral meshes are very important in scientific and  engineering computing. Indeed,
many popular softwares for computations of the fluid mechanics in two dimensions are defined on quadrilateral meshes.
However, most of nonconforming finite element methods for second order problems are  defined on  triangles \cite{Arnold,BaranSoyan,Ciarlet(1978),CrouzeixRaviart(1973),FortinSoulie(1983)}  while there are only
a few nonconforming finite element methods on quadrilaterals.

Compared with  nonconforming triangular finite elements, it is more difficult to construct nonconforming quadrilateral finite elements.
In fact,  a sufficient condition for convergence of consistency error terms is to require nonconforming functions
to  be  continuous at Gauss-Legendre points on interior edges of the triangulation used \cite{Shi(1987),ShiWang(2010),Stummel(1979),Wang(2002)}.
Hence, for $m$ order nonconforming finite elements,  there are $4m$ Gauss-Legendre points on the boundary of  each element.
However, these degrees of freedom on  $4m$ Gauss-Legendre points are not all linearly independent for the space of polynomials
whose  restrictions on four edges are  polynomials of degree $\leq m$,
  see \eqref{Rel} for details.
 This means that at least one relation holds, which motivates
the use of higher order monomials of one variable in shape function spaces. In such a spirit, a class of
nonconforming quadrilateral elements is proposed in literature, which includes, the Han element \cite{Han(1984)},
the nonconforming  rotated $Q_1$ element due to Rannacher and Turek \cite{RannacherTurek(1992)}, the Douglas-Santos-Sheen-Ye (DSSY) element \cite{DouSanSheYe99}, the  enriched  nonconforming  rotated $Q_1$ element due to Lin, Tobiska and Zhou \cite{Ltz04}. All of these nonconforming quadrilateral  finite element methods are of first order and  are stable for the Stokes problem with pure Dirichlet boundary conditions for the velocity. However, since the usual Korn inequality does not hold for  them, they can not be applied to the Stokes problem with
mixed  boundary conditions for the velocity.  Moreover, these nonconforming elements are somehow  not extended to higher order
 nonconforming elements in literature so far .

In \cite{ParkSheen(2003)}, a nonconforming linear element is introduced on quadrilateral meshes,
which is motivated by a key observation that any linear function on a quadrilateral
can be uniquely determined at any three of the four midpoints of edges, which leads to a set of  nodal basis
functions whose values at four edges satisfies the aforementioned relation.  A similar  element (but different on general quadrilateral meshes)
is designed in \cite{HuShi(2005)}, which is based on an observation that a frame of the linear function space on the  reference element
can be mapped and glued to form a basis of the interpolated space of the conforming bilinear element space by
the canonical interpolation operator of the nonconforming  rotated $Q_1$ element \cite{RannacherTurek(1992)}. This explains
why the element therein is named as ``constrained quadrilateral nonconforming rotated $Q_1$ element" by its authors.
Very recently, the ideas of \cite{HuShi(2005),ParkSheen(2003)}  are extended to design
a nonconforming cubic element on quadrilaterals in \cite{MengLuoSheen}. We refer interested readers to \cite{KimMengNamParkSheen2013,LeeSheen2006}
for the nonconforming quadratic element on quadrilaterals, which can be regarded as the quadrilateral counterpart of the triangular Fortin-Soulie element \cite{FortinSoulie(1983)}.

The purpose paper is to propose nonconforming finite elements of any order on quadrilaterals in a unifying way, which is somehow missed
 in literature.  Compared with nonconforming finite elements on triangles \cite{Arnold,BaranSoyan,Ciarlet(1978),CrouzeixRaviart(1973),FortinSoulie(1983)}, there are at least two more difficulties  on quadrilaterals:
 (1) what shape function spaces should be used for nonconforming finite elements of any order;
 (2) how to prove unisolvency of these degrees of freedom which consist of values of polynomials at aforementioned $4m$ Gauss-Legendre
  points  and  other degrees of freedom in the interior of  elements.  It should be stressed that nonconforming finite elements
  on quadrilaterals in literature are defined and analyzed one by one, see  \cite{DouSanSheYe99,Han(1984),HuShi(2005),KimMengNamParkSheen2013,LeeSheen2006,Ltz04,MengLuoSheen,ParkSheen(2003),RannacherTurek(1992)}.

 In order to overcome the first difficulty, we propose to use the following two families of  shape function spaces
\begin{equation}
R_m(\hat{K}):=P_m(\hat{K})+\text{span}\{\hat{x}^m\hat{y}-\hat{x} \hat{y}^m\},
\end{equation}
\begin{equation}
ER_m(\hat{K}):=P_m(\hat{K})+\text{span}\{\hat{x}^m\hat{y}-\hat{x} \hat{y}^m, \hat{x}^{m+1}- \hat{y}^{m+1} \},
\end{equation}
for odd integer $m\geq 1$. Here and  throughout this paper, $P_m(M)$ denotes the space of polynomials of degree $\leq m$ over the domain $M$;
$Q_m(M)$ denotes the space of polynomials of degree $\leq m$ in each variable. For even $m$, we propose to use
 the same shape function  spaces as  the serendipity elements of \cite{Arnold-Awanou2011,Ciarlet(1978)},
 namely,
\begin{equation}
R_m^{+}(\hat{K}):=P_m(\hat{K})+\text{span}\{\hat{x}^m\hat{y}, \hat{x}\hat{y}^m\}.
\end{equation}
These three families of elements generalize the $P_1$ nonconforming element of \cite{HuShi(2005),ParkSheen(2003)} and the nonconforming
 cubic element of \cite{MengLuoSheen},  the nonconforming rotated $Q_1$ element of \cite{RannacherTurek(1992)}, and the nonconforming quadratic element of \cite{KimMengNamParkSheen2013,LeeSheen2006}  to any order, respectively.

 Degrees of freedom are given for these shape function spaces, which are proved to be well defined based on some observation concerning
  Legendre polynomials on four edges. In addition, these finite element spaces are proved to be full spaces.

The rest of the paper is organized as follows.  Sections 2, 3 and 4
   present three families of shape function spaces and their corresponding
degrees of freedom, which are proved to be well-defined.   Section 5 defines  nonconforming finite element spaces which are proved to
 be full spaces, and shows  approximations of these spaces.
 Section 6  analyzes  consistency errors,
    which is followed by numerical examples in the final section.

\section{The first family of Shape function spaces}
 Since Gauss-Legendre points on each edge will be used to define degrees of freedom and consequently continuity  for new elements under consideration,  given an  integer $k\geq 0$,  let  $g_i$,  $i=-k, \cdots,-1, 0, 1,\cdots, k$, denote zeros of
  Legendre polynomials of degrees $2k+1$ on the interval $[-1,1]$.
By skew  symmetry, $g_i=-g_{-i}$, $i=1, \cdots, k$, and $g_0=0$.

Before we present  shape function spaces on $\hK:=[-1,1]^2$ and corresponding degrees of freedom,
we  investigate a special  relation of values  at $8k+4$  Gauss-Legendre points
$G_{r,i}=(1, g_i)$, $G_{l,i}=(-1,g_i)$, $G_{t,i}=(g_i,1)$, $G_{b,i}=(g_i, -1)$, $i=-k, \cdots, k$ on four edges of $\hK$,
of polynomials over $\hK$ whose restrictions on four edges are polynomials of degree $\leq 2k+1$.

\begin{lemma} Let $\hat{v}$ be a polynomial over $\hK$ such that its restrictions on four edges are polynomials of degree $\leq 2k+1$.
Then it holds that
\begin{equation}\lab{Rel}
 \begin{split}
 \sum\limits_{i=-k}^{k}\gamma_i\bigg(\hat{v}(1,g_i)+\hat{v}(-1,g_i)\bigg)= \sum\limits_{i=-k}^{k}\gamma_i\bigg(\hat{v}(g_i,1)+\hat{v}(g_i,-1)\bigg),
\end{split}
 \end{equation}
 where
 \begin{equation}
 \gamma_i=\prod\limits_{i\not=j=-(k+1)}^{k}\frac{1-g_j}{g_i-g_j}+\prod\limits_{i\not=j=-k}^{k+1}\frac{-1-g_j}{g_i-g_j}
 =\frac{2}{g_i^2}\prod\limits_{|i|\not=j=1}^{k}\frac{1-g_j^2}{g_i^2-g_j^2},
 \end{equation}
 for $i=-k, \cdots, -1, 1, \cdots, k$, and
 \begin{equation}
 \gamma_0=4\prod\limits_{j=1}^{k}\frac{g_j^2-1}{g_j^2}.
 \end{equation}
\end{lemma}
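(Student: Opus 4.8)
The plan is to reduce the two--dimensional identity \eqref{Rel} to a one--variable statement about a quadrature--type functional at the Gauss--Legendre nodes, and then glue the four edges together using the fact that a polynomial on $\hK$ has unambiguous vertex values. Throughout I adopt the convention $g_{-(k+1)}=-1$ and $g_{k+1}=1$, so that the two products defining $\gamma_i$ are exactly values of Lagrange basis polynomials: the first is $\ell_i^{(1)}(1)$, where $\ell_i^{(1)}$ is the Lagrange basis function attached to the node $g_i$ for the node set $\{-1,g_{-k},\dots,g_k\}$, and the second is $\ell_i^{(2)}(-1)$, with $\ell_i^{(2)}$ attached to $g_i$ for the node set $\{g_{-k},\dots,g_k,1\}$; thus $\gamma_i=\ell_i^{(1)}(1)+\ell_i^{(2)}(-1)$ for all $i\in\{-k,\dots,k\}$.

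The crux is the univariate claim
\[
 \sum_{i=-k}^{k}\gamma_i\, p(g_i)\;=\;2\bigl(p(1)+p(-1)\bigr)\qquad\text{for every polynomial }p\text{ with }\deg p\le 2k+1 .
\]
Indeed, such a $p$ is reproduced by Lagrange interpolation at the $2k+2$ nodes $\{-1,g_{-k},\dots,g_k\}$, and evaluating that interpolant at $\hat y=1$ gives $p(1)=\ell^{(1)}_{-(k+1)}(1)\,p(-1)+\sum_{i=-k}^{k}\ell^{(1)}_i(1)\,p(g_i)$. The coefficient $\ell^{(1)}_{-(k+1)}(1)=\prod_{i=-k}^{k}\frac{1-g_i}{-1-g_i}$ is computed using the skew symmetry $g_{-i}=-g_i$: pairing the factors for $i$ and $-i$ gives $\frac{1-g_i^2}{1-g_i^2}=1$, while the $i=0$ factor equals $-1$, so $\ell^{(1)}_{-(k+1)}(1)=-1$ and hence $\sum_{i=-k}^{k}\ell^{(1)}_i(1)\,p(g_i)=p(1)+p(-1)$. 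Running the same argument with the node set $\{g_{-k},\dots,g_k,1\}$ and evaluating at $\hat y=-1$ yields $\sum_{i=-k}^{k}\ell^{(2)}_i(-1)\,p(g_i)=p(-1)+p(1)$; adding the two identities and recalling $\gamma_i=\ell^{(1)}_i(1)+\ell^{(2)}_i(-1)$ proves the claim.

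Given the univariate identity, \eqref{Rel} is immediate: apply it to the four polynomials $p=\hat v(1,\cdot),\ \hat v(-1,\cdot),\ \hat v(\cdot,1),\ \hat v(\cdot,-1)$, each of degree $\le 2k+1$ by hypothesis. Summing the first two shows that the left--hand side of \eqref{Rel} equals $2\bigl(\hat v(1,1)+\hat v(1,-1)+\hat v(-1,1)+\hat v(-1,-1)\bigr)$, i.e.\ twice the sum of $\hat v$ over the four vertices of $\hK$; summing the last two shows the right--hand side equals the same quantity, since $\hat v$ is a genuine polynomial on $\hK$ and therefore its vertex values do not depend on which incident edge is used to evaluate them. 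Finally, the closed forms of $\gamma_i$ and of $\gamma_0$ stated in the lemma follow from a direct expansion of the Lagrange weights $\ell^{(1)}_i(1)$ and $\ell^{(2)}_i(-1)$, once more pairing the index $j$ with $-j$ and isolating the $j=0$ factor. There is no deep point here; the only real obstacle is the bookkeeping of the symmetric products --- getting the sign right in $\prod_{i=-k}^{k}\frac{1-g_i}{-1-g_i}=-1$ and tracking the telescoping that produces the factor $\frac{2}{g_i^2}\prod_{|i|\neq j=1}^{k}\frac{1-g_j^2}{g_i^2-g_j^2}$. What is really being said is that the functional $p\mapsto\tfrac12\sum_{i}\gamma_i\,p(g_i)$ reproduces $p(1)+p(-1)$ on $P_{2k+1}$, so that the sum of the boundary values of $\hat v$ over either pair of opposite edges is already determined by the $2k+1$ interior Gauss points on those edges, and the two such sums agree because both return the total of $\hat v$ over all four corners of $\hK$.
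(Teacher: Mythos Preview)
Your proof is correct and rests on the same ingredients as the paper's---the two augmented Lagrange node sets $\{-1,g_{-k},\dots,g_k\}$ and $\{g_{-k},\dots,g_k,1\}$, the skew symmetry $g_{-i}=-g_i$, and the consistency of vertex values---but you organize them differently. The paper writes out the four vertex-matching equations (one per corner of $\hK$), sums them, and uses only the equality $\alpha_{-(k+1)}=\beta_{k+1}$ to cancel the corner contributions. You go one step further and actually evaluate that common quantity as $-1$; this lets you isolate the clean univariate identity $\sum_i\gamma_i\,p(g_i)=2\bigl(p(1)+p(-1)\bigr)$ for all $p\in P_{2k+1}$, from which \eqref{Rel} drops out by applying it to each of the four edge restrictions and recognizing both sides as twice the sum of $\hat v$ over the four vertices. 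What your route buys is transparency: the relation is seen as a single one-dimensional quadrature-type fact applied four times, rather than as the residue of a four-equation elimination. What the paper's route buys is that it never needs the exact value $-1$, only the symmetry $\alpha_{-(k+1)}=\beta_{k+1}$; but since that value is easy to compute, this is not a real saving.
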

\begin{remark} For k=0, the relation \eqref{Rel} is the constraint used
   in \cite{HuShi(2005),ParkSheen(2003)};
  for k=1, the relation \eqref{Rel}  recovers that of \cite{MengLuoSheen}.
 \end{remark}
\begin{proof} The main idea is: Since the restrictions of $\hat{v}$ on four edges are polynomials of degree $\leq 2k+1$, they can be exactly
represented by  $2k+2$ Lagrange interpolation basis functions of degree $\leq 2k+1$ in one dimension, which
gives two representations of values of $\hat{v}$ at each corner of $\hK$,  then the desired result follows.
  In order to accomplish this,  first add the point $g_{-k-1}=-1$ to these $2k+1$ Gauss-Legendre points,
and  define a first set of  Lagrange interpolation basis functions:
 \begin{equation*}
 \mathcal{L}_i^0(\hat{x})=\prod\limits_{i\not=j=-(k+1)}^{k}\frac{\hat{x}-g_j}{g_i-g_j}  \text{ for }\hat{x}\in [-1,1], i=-(k+1), -k, \cdots, k.
 \end{equation*}
  Then, add the point $g_{k+1}=1$ to these $2k+1$ Gauss-Legendre points, and  define another set of  Lagrange interpolation basis functions:
 \begin{equation*}
 \mathcal{L}_i^1(\hat{x})=\prod\limits_{i\not=j=-k}^{k+1}\frac{\hat{x}-g_j}{g_i-g_j}  \text{ for }\hat{x}\in [-1,1], i=-k, \cdots,  k+1.
 \end{equation*}
 Since $\text{span}\{\mathcal{L}_{-(k+1)}^0(\hat{x}), \cdots, \mathcal{L}_{k}^0(\hat{x})\}=\text{span}\{\mathcal{L}_{-k}^1(\hat{x}), \cdots, \mathcal{L}_{k+1}^1(\hat{x})\}=P_{2k+1}([-1,1])$, and  the restriction of  $\hat{v}$  on each edge of $\hK$ is a polynomial of degree $\leq 2k+1$,
  these restrictions can be expressed as
   \begin{equation}\label{2.4}
 \ad{
  \hat{v}(\pm 1,\hat{y})
        &=\sum\limits_{i=-(k+1)}^{k} \mathcal{L}_i^0(\hat{y})\hat{v}(\pm 1,g_i),
	& \hat{v}(\hat{x}, \pm 1)&=
 \sum\limits_{i=-(k+1)}^{k} \mathcal{L}_i^0(\hat{x})\hat{v}(g_i, \pm 1), \\
   \hat{v}(\pm 1,\hat{y})
    &=\sum\limits_{i=-k}^{k+1} \mathcal{L}_i^1(\hat{y})\hat{v}(\pm 1,g_i),
&  \hat{v}(\hat{x}, \pm 1) &=
 \sum\limits_{i=-k}^{k+1} \mathcal{L}_i^1(\hat{x})\hat{v}(g_i, \pm 1).
  }
 \end{equation}
  Let $\alpha_i= \mathcal{L}_i^0(1)$, $i=-(k+1), \cdots, k$,
   and $\beta_i=\mathcal{L}_i^1(-1)$, $i=-k, \cdots, k+1$.
  The skew symmetry of  Gauss-Legendre points,  and
    definitions of $\alpha_{-(k+1)}$ and $\beta_{k+1}$,    yield
   \begin{equation}\label{2.5}
   \alpha_{-(k+1)}=\prod\limits_{ j=-k}^{k}\frac{1-g_j}{-1-g_j}=
   \prod\limits_{ j=-k}^{k}\frac{-1-g_j}{1-g_j}=\beta_{k+1}.
   \end{equation}
   Since  values $\hat{v}(1,1)$ (resp.  $\hat{v}(-1,1)$,  $\hat{v}(1,-1)$ and  $\hat{v}(-1,-1)$) of two representations are identical,
   it follows from \eqref{2.4} that
  \begin{equation*}
  \begin{split}
   &\sum\limits_{i=-(k+1)}^k\alpha_i\hat{v}(1,g_i)
   =\sum\limits_{i=-(k+1)}^k\alpha_i\hat{v}(g_i,1),
   \quad\sum\limits_{i=-k}^{k+1}\beta_i\hat{v}(1,g_i)
       =\sum\limits_{i=-(k+1)}^k\alpha_i\hat{v}(g_i,-1),\\
    &\sum\limits_{i=-(k+1)}^k\alpha_i\hat{v}(-1, g_i)
    =\sum\limits_{i=-k}^{k+1}\beta_i\hat{v}(g_i,1),
  \quad \sum\limits_{i=k}^{k+1}\beta_i\hat{v}(-1,g_i)=\sum\limits_{i=-k}^{k+1}\beta_i\hat{v}(g_i,-1).
  \end{split}
  \end{equation*}
  By \eqref{2.5}, this gives
  \begin{equation*}
   \begin{split}
 \sum\limits_{i=-k}^{k}(\alpha_i+\beta_i)\big(\hat{v}(1,g_i)+\hat{v}(-1,g_i)\big)
= \sum\limits_{i=-k}^{k}(\alpha_i+\beta_i)\big(\hat{v}(g_i,1)+\hat{v}(g_i,-1)\big).
  \end{split}
   \end{equation*}
  Then a direct calculation completes the proof.
 \end{proof}

By canceling a common factor,  \meqref{Rel} can be simplified slightly that
\a{  \sum\limits_{i=-k}^{k}
   \frac{ \gamma_i' }{\prod_{|i|\ne j=1}^k (g_j^2-g_i^2)}
      \big(\hat{v}(1,g_i)+\hat{v}(-1,g_i)
         -\hat{v}(g_i,1)-\hat{v}(g_i,-1)\big)&=0, }
where \a{ \gamma_i'=\begin{cases} 2 & \hbox{if}\ i=0, \\
                    1/g_i^2  & \hbox{if} \ i\ne 0.\end{cases} }

Given an odd integer  $m=2k+1\geq 0$, recall  shape function spaces:
\begin{equation}
R_m(\hat{K}):=P_m(\hat{K})+\text{span}\{\hat{x}^m\hat{y}-\hat{x} \hat{y}^m\}
     \text{ for any } (\hat{x},\hat{y})\in \hat{K}:=[-1,1]^2.
\end{equation}
Note that for $k=0$, $R_m(\hK)$ is the shape function space of
    \cite{HuShi(2005)}, i.e.,
\a{  R_1 & = \text{span}\{1, \hat{x}, \hat{y} \}. }
For $k=1$, $R_m(\hat{K})$  is the shape function space of \cite{MengLuoSheen}.

 To define  degrees of freedom for this space, let
$G$ denote the set of Gauss-Legendre points on four edges of $\hK$, namely,
\begin{equation} \lab{G}
G:=\{(1, g_i),(-1,g_i), (g_i,1), (g_i, -1), i=-k, \cdots, k\}.
\end{equation}
 To define other  degrees of freedom in the interior of $\hK$,  let
\e{\lab{I}
  I:=\{(\hat{x}_\ell, \hat{y}_\ell), \ell=1, \cdots, (2k-1)(k-1)\}
} be a set of interior points of $\hat{K}$,  the standard Lagrange points
   inside the reference element  $\hat{K}$,
   so that any polynomial $\hat{q}(\hat{x},\hat{y})\in P_{2k-3}(\hat{K})$
    can be uniquely defined by its values at points in $I$.

\begin{theorem}\label{Theorem2.1} If $\hat{v}(\hat{x},\hat{y})\in R_m(\hK)$
   vanishes at all points in $G \cup I$, cf. \meqref{G} and \meqref{I},
   then $\hat{v}(\hat{x},\hat{y})\equiv 0$.
\end{theorem}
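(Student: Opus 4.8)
The plan is a standard unisolvency argument: since the dimension of $R_m(\hK)$ equals the number of degrees of freedom, it suffices to show that vanishing of $\hat v$ at all nodes forces $\hat v\equiv 0$. First I would count: $\dim P_m(\hK)=\binom{m+2}{2}=\binom{2k+3}{2}=(k+1)(2k+3)$, so $\dim R_m(\hK)=(k+1)(2k+3)+1$. On the other hand $\card(G)=8k+4$, but by Lemma~\ref{Rel} the functionals attached to $G$ satisfy exactly one linear relation when restricted to polynomials whose edge traces have degree $\le 2k+1$ (which is the case for every element of $R_m(\hK)$, since the added monomial $\hat x^m\hat y-\hat x\hat y^m$ restricts to a degree-$(2k+1)$ polynomial on each edge, in fact to $\pm(\hat x^m\mp\hat x)$ or $\pm(\hat y^m\mp\hat y)$ up to sign on $\hat x=\pm1$, $\hat y=\pm1$), so the $G$-functionals span a space of dimension $8k+3$; together with $\card(I)=(2k-1)(k-1)$ interior nodes this gives $8k+3+(2k-1)(k-1)=(k+1)(2k+3)+1$, matching $\dim R_m(\hK)$. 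Hence the count is consistent and a genuine unisolvency proof is needed rather than a cheap dimension gap.

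The core of the argument is to exploit the edge traces. Write $\hat v=\hat p+c\,(\hat x^m\hat y-\hat x\hat y^m)$ with $\hat p\in P_m(\hK)$. On the edge $\hat x=1$, the trace is $\hat p(1,\hat y)+c(\hat y-\hat y^m)$, a one-variable polynomial of degree $\le 2k+1=m$ which vanishes at the $2k+1$ Gauss--Legendre points $g_{-k},\dots,g_k$; so it has degree $\le 2k+1$ with $2k+1$ roots, hence it is a scalar multiple of the Legendre polynomial $L_{2k+1}$ of degree $2k+1$. Doing the same on all four edges, I get that each edge trace of $\hat v$ is a multiple of $L_{2k+1}$ in the appropriate variable. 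The next step is to identify those four scalars. Here I would use that $L_{2k+1}$ is odd, so $L_{2k+1}(1)=1$ and $L_{2k+1}(-1)=-1$; evaluating the trace relations at the four corners of $\hK$ and matching the two expressions for $\hat v$ at each corner pins down the four edge-trace constants in terms of one another, and then Lemma~\ref{Rel} (the single constraint) together with the corner-compatibility forces all four of them to be equal; call the common value $d$. Thus $\hat v$ restricted to $\partial\hK$ equals $d$ times ``$L_{2k+1}$ on each edge.'' Because $\hat x^m\hat y-\hat x\hat y^m$ vanishes on $\hat x=\pm1$ only through its $\hat y-\hat y^m$ part and similarly on $\hat y=\pm1$, a short computation shows the unique element of $R_m(\hK)$ whose boundary trace is ``$L_{2k+1}$ on each edge'' is (a fixed multiple of) a distinguished function $\hat\phi_0\in R_m(\hK)\setminus P_m(\hK)$ — the analogue of the bubble-plus-top-monomial combination appearing for $k=0,1$ — so after subtracting $d\hat\phi_0$ we reduce to $\hat v-d\hat\phi_0\in P_m(\hK)$ with zero trace on $\partial\hK$; such a polynomial of degree $\le m=2k+1$ vanishing on the whole boundary square is divisible by $(\hat x^2-1)(\hat y^2-1)$, hence has the form $(\hat x^2-1)(\hat y^2-1)\hat q$ with $\hat q\in P_{2k-3}(\hK)$. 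The interior nodes $I$ were chosen precisely so that $\hat q$ is determined by its values there; since $\hat v$ (and $\hat\phi_0$, after checking it vanishes at the interior nodes — which I would build into the construction of $I$, or alternatively absorb $\hat\phi_0$'s interior values into a compatibility check) vanishes on $I$, we get $\hat q\equiv 0$, then $d=0$ by looking back at a boundary value, and finally $\hat v\equiv0$.

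I expect the main obstacle to be the bookkeeping in the middle step: verifying that the four edge-trace constants are forced to be \emph{equal} (not merely related) and that the resulting boundary data is attained by an element of $R_m(\hK)$ — i.e.\ that the single extra degree of freedom $c$ is exactly enough to match the common constant $d$, with no parity or sign obstruction. This is where Lemma~\ref{Rel} is essential and where the specific choice of the enriching monomial $\hat x^m\hat y-\hat x\hat y^m$ (as opposed to, say, $\hat x^m\hat y+\hat x\hat y^m$) matters: its skew structure produces edge traces $\pm(\hat x-\hat x^m)$ on the vertical edges and $\mp(\hat y-\hat y^m)$ on the horizontal edges with opposite signs, which is exactly what is compatible with the sign pattern of $L_{2k+1}$ at $\pm1$ dictated by the corner-compatibility equations; I would check this sign compatibility carefully, as it is the crux of why $R_m(\hK)$ (and not $P_m(\hK)$ alone, nor a different enrichment) yields a well-defined element. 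A secondary, more routine point is confirming that $I$ can indeed be chosen inside $\hK$ so that point evaluation on $I$ is unisolvent for $P_{2k-3}(\hK)$, which holds for the standard principal-lattice Lagrange nodes, and that it is consistent with (does not over-constrain) the already-used boundary functionals.
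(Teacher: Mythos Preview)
Your overall architecture is right (edge traces are multiples of $L_{2k+1}$; corner matching relates the four edge constants; once the boundary trace is zero, factor out the bubble and use the interior Lagrange nodes), but the middle step has a genuine gap.

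First, invoking the relation \eqref{Rel} here does nothing. That identity holds for \emph{every} element of $R_m(\hK)$; moreover, since your $\hat v$ already vanishes at all Gauss--Legendre points, both sides of \eqref{Rel} are trivially zero. It carries no new information in this argument. Corner compatibility alone already pins the four edge constants to the pattern $(c_0,-c_0,c_0,-c_0)$ (with the sign coming from $L_{2k+1}(-1)=-L_{2k+1}(1)$), so the issue is not to make them ``equal'' but to show that the single remaining scalar $c_0$ is zero.

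Second, and this is the real problem, your ``distinguished function'' $\hat\phi_0\in R_m(\hK)$ whose boundary trace is $L_{2k+1}$ on each edge with that sign pattern \emph{does not exist}. Try it for $m=1$: $R_1=\operatorname{span}\{1,\hat x,\hat y\}$ and you would need $\hat\phi_0(1,\hat y)=\hat y$, $\hat\phi_0(-1,\hat y)=-\hat y$, which already forces the coefficient of $\hat y$ to be simultaneously $1$ and $-1$. The same obstruction persists for all $k$: writing $\hat v=a_0(\hat x^{2k+1}\hat y-\hat x\hat y^{2k+1})+\sum c_{i,j}\hat x^i\hat y^j$, the coefficient of $\hat y^{2k+1}$ in $\hat v(\pm1,\hat y)$ is $\mp a_0+c_{0,2k+1}$, while the coefficient of $\hat x^{2k+1}$ in $\hat v(\hat x,\pm1)$ is $\pm a_0+c_{2k+1,0}$. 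Matching these to the leading coefficient of $\pm c_0 L_{2k+1}$ on perpendicular edges (after the sign pattern forced by the corners) yields $a_0=c_0\cdot(\text{lead.\ coeff.})$ from one pair and $a_0=-c_0\cdot(\text{lead.\ coeff.})$ from the other, hence $c_0=0$ (and $a_0=0$). This leading-coefficient comparison is exactly the paper's argument, and it is precisely the nonexistence of your $\hat\phi_0$; the specific skew enrichment $\hat x^m\hat y-\hat x\hat y^m$ contributes with \emph{opposite} signs to the top monomials on the $\hat x$-edges versus the $\hat y$-edges, which is incompatible with the Legendre boundary data rather than compatible with it. Once $c_0=0$, your endgame goes through unchanged.
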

\begin{proof} The function $\hat{v}(\hat{x},\hat{y})\in R_m(\hK)$
    can be expressed as
\an{\lab{express}
   \hat{v}(\hat{x},\hat{y}) =a_0(\hat{x}^{2k+1}\hat{y}
   -\hat{x}\hat{y}^{2k+1})+ \sum_{i+j\le 2k+1} c_{i,j}\hat{x}^i \hat{y}^j. }
As $\hat{v}(1,\hat{y})=0$ at $2k+1$ Gauss-Legendre points, $\hat{v}(1,\hat{y})$ is a multiple
of the $(2k+1)$-st Legendre polynomial,
   $\hat{v}(1,\hat{y})=c_0 L_{2k+1}(\hat{y})$ for some constant $c_0$.  We will show that the constant $c_0$
is zero.  In fact, by the continuity of $\hat{v}(\hat{x},\hat{y})$ at four vertexes of the square,
\an{\lab{ex1}
   \hat{v}(-1,\hat{y})&=-c_0L_{2k+1}(\hat{y})= -\hat{v}( 1,\hat{y})
      \text{ for }\hat{y}\in [-1,1], \\
   \lab{ex2}
    \hat{v}(\hat{x},-1)&=-c_0L_{2k+1}(\hat{x})=-\hat{v}(1,\hat{x})
     \text{ for }\hat{x}\in [-1,1]. }
This indicates that
\a{
   & \left\{
   \ad{
  & c_{0,0}+c_{2,0}+c_{4,0}+c_{6,0}+\cdots+c_{2k,0}&&=0, &&(\text{ for } j=0)\\
  & c_{0,1}+c_{2,1}+c_{4,1}+c_{6,1}+\cdots+c_{2k,1}&&=0, && (\text{ for } j=1)\\
  & c_{0,2}+c_{4,2}+c_{6,2}+c_{8,2}+\cdots+c_{2k-2,2}&&=0, &&(\text{ for } j=2)\\
  & \vdots\\
  & c_{0,2k-2}+c_{2, 2k-2}&&=0, &&(\text{ for }j=2k-2)\\
  &c_{0,2k-1}+c_{2, 2k-1}&&=0, &&(\text{ for }j=2k-1)\\
 &c_{0,2k}&&=0, &&(\text{ for }j=2k)\\
  & c_{0,2k+1}&&=0. &&(\text{ for }j=2k+1)&&
    } \right.\\
   & \left\{
   \ad{
  & c_{0,0}+c_{0,2}+c_{0,4}+c_{0,6}+\cdots+c_{0, 2k}&&=0, && (\text{ for } i=0)\\
  & c_{1,0}+c_{1,2}+c_{1,4}+c_{1,6}+\cdots+c_{1,2k}&&=0, &&
          (\text{ for } i=1)\\
  & c_{2,0}+c_{2,4}+c_{2,6}+c_{2,8}+\cdots+c_{2,2k-2}&&=0, &&
              (\text{ for } i=2)\\
  & \vdots\\
  & c_{2k-2, 2}+c_{2k-2, 2}&&=0, &&(\text{ for }i=2k-2)\\
  &c_{2k-1, 0}+c_{2k-1, 2}&&=0, &&(\text{ for }i=2k-1)\\
 &c_{2k,0}&&=0, &&(\text{ for }i=2k)\\
  & c_{2k+1,0}&&=0. &&(\text{ for }i=2k+1)&&
  }  \right.
  }
   Since $c_{0,2k+1}=0$, by comparing the
      coefficients of $\hat y^{2k+1}$ in \meqref{ex1}, $a_0$ is a multiple of
     the leading coefficient of the Legendre polynomial $L_{2k+1}$:
  \a{   c_0 \frac{(4k+2)!}{2^{2k+1}(2k+1)! } = -a_0. }
  Again,  comparing the coefficients of $\hat x^{2k+1}$ in \meqref{ex2},
   \a{   c_0 \frac{(4k+2)!}{2^{2k+1}(2k+1)! } = a_0. }
 Hence $c_0=0$. Thus  $\hat{v}(\hat{x},\hat{y})$ vanishes on the whole boundary
    $\partial \hat{K}$ and can be expressed as
$$
\hat{v}(\hat{x},\hat{y})=\hat{b}(\hat{x},\hat{y})\hat{q}(\hat{x},\hat{y}) \text{ with } \hat{q}(\hat{x},\hat{y})\in P_{2k-3}(\hat{K}),
$$
where the bubble function $\hat{b}(\hat{x}, \hat{y})=(1-\hat{x}^2)(1-\hat{y}^2)$. Finally, since $\hat{v}(\hat{x},\hat{y})$ vanishes at points in $I$, $\hat{q}(\hat{x},\hat{y})\equiv 0$. This  completes the proof.
\end{proof}

The above theorem implies that any $\hat{v}(\hat{x},\hat{y})\in R_m(\hK)$ can be uniquely  determined by
its values at  points in $G\cup I$. Though the number of points in $G\cup I$ is $(2k+3)(k+1)+2$,
which is $1$ greater than the dimension $(2k+3)(k+1)+1$ of $R_m(\hat{K})$,
 the relation \eqref{Rel}  implies that the number of linearly independent functionals defined
 for the space $R_m(\hK)$ is equal to the dimension of the shape function space.
  This  motivates the following degrees of freedom for the shape function space $R_m(\hK)$:
\begin{itemize}
\item values at points in $G$ which satisfy the relation \eqref{Rel};
\item values  at points in $I$.
\end{itemize}

\begin{remark}  We can take the following shape function spaces
\begin{equation}
\tilde{R}_{m}(\hK):=P_{m}(\hK)+\text{span}\{\hat{x}^m\hat{y}\}, \text{ and }
\bar{R}_{m}(\hK):=P_{m}(\hK)+\text{span}\{\hat{x}\hat{y}^m\}.
\end{equation}
\end{remark}

\section{The second family of shape function spaces}
As we see in the previous section, functionals defined by values at points in
$G \cup I$ are not all linearly independent for shape function spaces $R_m(\hK)$
defined in the previous section,  there is a relation \eqref{Rel}
   for all functions in $R_m(\hK)$.
 To make these functionals linearly independent for some shape function spaces, we propose to use higher order
  monomials of one variable, say $\hat{x}^{2k+2}$ and $\hat{y}^{2k+2}$, which motivates to
    enrich $R_m(\hK)$ by $\text{span}\{\hat{x}^{2k+2}-\hat{y}^{2k+2}\}$.  This leads to the following shape function spaces:
\begin{equation}
ER_m(\hK):=P_{m}(\hK)+\text{span}\{\hat{x}^{2k+1}\hat{y}-\hat{x}\hat{y}^{2k+1}, \hat{x}^{2k+2}-\hat{y}^{2k+2}\}.
\end{equation}
\begin{remark}
For $k=0$, the space $ER_m(\hK)$ is the shape function space of the nonconforming rotated $Q_1$ element from \cite{RannacherTurek(1992)}:
\a{ ER_1(\hK) = \text{span}\{1, \hat{x}, \hat{y},  \hat{x}^{2}-\hat{y}^{2}\}. }
\end{remark}

For the space $ER_m(\hK)$, the  degrees of freedom are:
\begin{itemize}
\item values at  points in $G$, defined in \meqref{G};
\item values  at points in $I$, defined in \meqref{I}.
\end{itemize}
\begin{theorem}\label{Theorem3.1} If $\hat{v}(\hat{x},\hat{y})\in ER_m(\hK)$ vanishes at  points in $G\cup I$,
 then $\hat{v}(\hat{x},\hat{y})\equiv 0$.
\end{theorem}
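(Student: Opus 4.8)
The plan is to mimic the argument of Theorem~\ref{Theorem2.1}, tracking one additional degree of freedom coming from the extra basis function $\hat{x}^{2k+2}-\hat{y}^{2k+2}$. Write $\hat{v}\in ER_m(\hK)$ as
\[
\hat{v}(\hat{x},\hat{y}) = a_0(\hat{x}^{2k+1}\hat{y}-\hat{x}\hat{y}^{2k+1}) + b_0(\hat{x}^{2k+2}-\hat{y}^{2k+2}) + \sum_{i+j\le 2k+1} c_{i,j}\hat{x}^i\hat{y}^j.
\]
Each restriction of $\hat{v}$ to an edge of $\hK$ is again a polynomial of degree $\le 2k+1$ in one variable: on $\hat{x}=\pm1$ the term $b_0\hat{x}^{2k+2}$ is a constant and contributes only to the degree-$0$ part, and $-b_0\hat{y}^{2k+2}$ is genuinely degree $2k+2$ — so here I must be slightly careful. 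Actually $\hat v(1,\hat y) = a_0(\hat y - \hat y^{2k+1}) + b_0(1 - \hat y^{2k+2}) + \sum c_{i,j}\hat y^j$, which has degree $2k+2$ unless the $\hat y^{2k+2}$ coefficient vanishes. Since $\hat v(1,\hat y)$ vanishes at the $2k+1$ Gauss–Legendre points, and $\hat v(-1,\hat y)$ differs from it only through odd powers of $\hat x$ evaluated at $\pm1$, the combination $\hat v(1,\hat y)+\hat v(-1,\hat y)$ is an even-degree polynomial in $\hat y$ vanishing at $2k+1$ symmetric points; and similarly $\hat v(1,\hat y) - \hat v(-1,\hat y)$. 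The cleanest route is: first use the relation \eqref{Rel}, which holds for $\hat v$ restricted to the four edges provided those restrictions have degree $\le 2k+1$ — so I should instead argue that the $\hat y^{2k+2}$ (resp. $\hat x^{2k+2}$) coefficient of the edge restriction must be $0$ as a consequence of the vanishing data, reducing to the degree-$\le 2k+1$ case.

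The key steps, in order: (1) Show $b_0=0$. The four edge restrictions of $\hat v$ vanish at the $4(2k+1)$ Gauss–Legendre points in $G$. On $\hat x=1$ the restriction is $-b_0\hat y^{2k+2} + (\text{deg}\le 2k+1)$; on $\hat y=1$ it is $b_0\hat x^{2k+2} + (\text{deg}\le 2k+1)$. A polynomial of degree $\le 2k+2$ vanishing at the $2k+1$ zeros of $L_{2k+1}$ lies in the $2$-dimensional space $\mathrm{span}\{L_{2k+1}(\hat y),\ \hat y L_{2k+1}(\hat y)\}$ — wait, that's degree $2k+2$, dimension $2$, correct. So $\hat v(1,\hat y) = \mu_r L_{2k+1}(\hat y) + \nu_r\,\hat y L_{2k+1}(\hat y)$ and similarly on the other three edges. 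Matching the leading ($\hat y^{2k+2}$) coefficient gives $\nu_r$ in terms of $-b_0$; doing this on $\hat y=1$ gives $\nu_t$ in terms of $+b_0$; corner continuity at the four vertices (each vertex shared by two edges) then forces relations among $\mu$'s and $\nu$'s, and — as in \eqref{2.5}/\eqref{ex2} — comparing the two expressions for $\hat v(1,1)$, $\hat v(-1,1)$, etc., yields $b_0 = -b_0$, hence $b_0=0$. (2) Once $b_0=0$, $\hat v\in R_m(\hK)$ and vanishes on $G\cup I$, so Theorem~\ref{Theorem2.1} applies verbatim and gives $\hat v\equiv 0$. Actually it is cleaner not to invoke Theorem~\ref{Theorem2.1} as a black box for the $c_0=0$ step, since the dimension count differs; instead, rerun the same argument: the edge restriction $\hat v(1,\hat y)$ is now degree $\le 2k+1$ vanishing at $2k+1$ points, so $\hat v(1,\hat y)=c_0 L_{2k+1}(\hat y)$; corner continuity and comparison of the $\hat x^{2k+1}$, $\hat y^{2k+1}$ coefficients against the $a_0$ term force $c_0=0$ exactly as in \meqref{ex1}--\meqref{ex2}; then $\hat v$ vanishes on $\partial\hK$, so $\hat v = (1-\hat x^2)(1-\hat y^2)\hat q$ with $\hat q\in P_{2k-3}(\hK)$, and vanishing on $I$ kills $\hat q$.

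The main obstacle is step (1): being careful that the edge restrictions now have degree up to $2k+2$, so that a single vanishing condition at $2k+1$ nodes leaves a two-parameter family per edge rather than one; the bookkeeping of the eight coefficients $(\mu,\nu)$ across four edges, tied together by the four corner-continuity identities and by the requirement that the $\hat y^{2k+2}$ coefficient on a vertical edge equals $-b_0$ while the $\hat x^{2k+2}$ coefficient on a horizontal edge equals $+b_0$, must be shown to force $b_0=0$. I expect this to come down, after using the skew symmetry $g_i=-g_{-i}$ and the identity $\alpha_{-(k+1)}=\beta_{k+1}$ from \meqref{2.5}, to the same sign-flip mechanism that produced $c_0=0$ in Theorem~\ref{Theorem2.1}, now applied to the leading coefficient $b_0$. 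A dimension sanity check supports this: $\dim ER_m(\hK) = \dim R_m(\hK) + 1 = (2k+3)(k+1)+2 = \#(G\cup I)$, so here the functionals at $G\cup I$ are expected to be exactly unisolvent (no relation survives), consistent with $b_0$ being pinned down rather than free.
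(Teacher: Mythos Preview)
Your proposal is correct and takes essentially the same approach as the paper: the paper also writes each edge restriction as $L_{2k+1}(\cdot)(a_i\cdot+b_i)$ (your $\mu\,L_{2k+1}+\nu\cdot L_{2k+1}$), compares leading coefficients to tie the $\nu$'s to $c_4$ (your $b_0$) via \eqref{3.4}, uses the remaining coefficient comparisons \eqref{3.5} and the four corner-continuity identities \eqref{3.6}, and solves the resulting linear system to kill all eight edge parameters and hence $c_4$. The only difference is organizational---the paper handles all unknowns simultaneously rather than isolating $b_0=0$ first and then reducing to Theorem~\ref{Theorem2.1}---but the mechanism is identical.
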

\begin{proof} By the definition of  $ER_m(\hK)$, $\hat{v}(\hat{x}, \hat{y})$ can be expressed as
\begin{equation}\label{3.2}
\hat{v}(\hat{x}, \hat{y})=\hat{v}_1(\hat{x}, \hat{y})+c_1\hat{x}^{2k+1}+c_2\hat{y}^{2k+1}+c_3(\hat{x}^{2k+1}\hat{y}-\hat{x}\hat{y}^{2k+1})+c_4(\hat{x}^{2k+2}-\hat{y}^{2k+2}),
\end{equation}
where $\hat{v}_1(\hat{x}, \hat{y})\in P_{m}(\hK)\backslash\text{span}\{\hat{x}^{2k+1}, \hat{y}^{2k+1}\}$, and $c_i$, $i=1,\cdots, 4$,  are four interpolation parameters. This implies  that the restrictions of  $\hat{v}$ on four edges of $\hK$ are polynomials of degree $\leq 2k+2$.
 Since $\hat{v}(\hat{x}, \hat{y})$ vanishes at  $m=2k+1$ Gauss-Legendre  points on four edges of $\hat{K}$, these restrictions on four edges
  can be written as
 \begin{equation}\label{3.3}
 \begin{split}
 &\hat{v}(\hat{x}, 1)=L_{2k+1}(\hat{x})(a_1\hat{x}+b_1),
    \quad\hat{v}(\hat{x}, -1)=L_{2k+1}(\hat{x})(a_2\hat{x}+b_2),\\
 & \hat{v}(1, \hat{y})=L_{2k+1}(\hat{y})(a_3\hat{y}+b_3),
   \quad\hat{v}(-1, \hat{y})=L_{2k+1}(\hat{y})(a_4\hat{y}+b_4),
 \end{split}
 \end{equation}
 where $a_i$, $b_i$, $i=1, \cdots, 4$, are some constants which will be shown to be zero next.
To this end, let $c={(4k+2)!}/(2^{2k+1}(2k+1)!)$
    be the coefficient of the monomial $\hat{x}^{2k+1}$
   of $L_{2k+1}(\hat{x})$.
A comparison of coefficients from \eqref{3.2} and \eqref{3.3} for monomials
  $\hat{x}^{2k+2}$, $\hat{y}^{2k+2}$, $\hat{x}^{2k+1}$, and $\hat{y}^{2k+1}$ leads to
\begin{equation}\label{3.4}
   a_1=a_2=-a_3=-a_4=c_4/c,
\end{equation}
and
\begin{equation}\label{3.5}
\begin{split}
c_1+c_3=c\,b_1,\quad c_1-c_3=c\,b_2,\\
c_2-c_3=c\,b_3, \quad c_2+c_3=c\,b_4.
\end{split}
\end{equation}
 Note that $\hat{v}(\hat{x},1)$ and $\hat{v}(1,\hat{y})$ are equal at corner
$(1,1)$,  $\hat{v}(\hat{x},1)$ and $\hat{v}(-1,\hat{y})$ are equal at corner $(-1,1)$,
 $\hat{v}(\hat{x},-1)$ and $\hat{v}(1, \hat{y})$ are equal at corner $(1,-1)$, $\hat{v}(\hat{x}, -1)$ and  $\hat{v}(-1, \hat{y})$
are equal at corner $(-1,-1)$. Since $L_{2k+1}(1)=-L_{2k+1}(-1)$, these observations give
\begin{equation}\label{3.6}
\begin{split}
a_1+b_1=a_3+b_3, \quad a_1-b_1=a_4+b_4,\\
a_2+b_2=a_3-b_3, \quad a_2-b_2=a_4-b_4.
\end{split}
\end{equation}
It follows from \eqref{3.4}, \eqref{3.5} and \eqref{3.6} that
\begin{equation}
a_1=a_2=a_3=a_4=b_1=b_2=b_3=b_4=c_1=c_2=c_3=0.
\end{equation}
This implies that $c_4=0$ and $\hat{v}(\hat{x},\hat{y})$ vanishes on the boundary of $\hK$. Hence,
$$
\hat{v}(\hat{x},\hat{y})=\hat{b}(\hat{x},\hat{y})\hat{q}(\hat{x},
   \hat{y}) \text{ with } \hat{q}(\hat{x},\hat{y})\in P_{2k-3}(\hat{K}),
$$
where the bubble function $\hat{b}(\hat{x}, \hat{y})=(1-\hat{x}^2)(1-\hat{y}^2)$. Finally,
 since $\hat{v}(\hat{x},\hat{y})$ vanishes at points in $I$, $\hat{q}(\hat{x},\hat{y})\equiv 0$. This  completes the proof.
\end{proof}


For the space $ER_m(\hK)$, we define another set of  degrees of freedom as follows
\begin{itemize}
\item moments of order $\leq 2k$ on each edge of $\hK$;
\item values  at points in $I$.
\end{itemize}
We note that these two sets of degrees of freedom are not equivalent as $(2k+2)$ polynomials
    are involved in  $ER_m(\hK)$.

\begin{theorem}\label{Theorem3.5} For any $\hat{v}(\hat{x},\hat{y})\in ER_m(\hK)$, suppose its moments of order $\leq 2k$ on each edge of $\hK$
and   values  at points in $I$ vanish.  Then $\hat{v}(\hat{x},\hat{y})\equiv 0$.
\end{theorem}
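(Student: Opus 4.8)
The plan is to follow the template of the proof of Theorem~\ref{Theorem3.1}, changing only the first step: the point-value constraints on the edges are replaced by the vanishing-moment constraints, which must be re-expressed as a statement about the Legendre expansions of the edge traces. Write $\hat v$ as in \eqref{3.2}, namely $\hat v = \hat v_1 + c_1\hat x^{2k+1} + c_2\hat y^{2k+1} + c_3(\hat x^{2k+1}\hat y - \hat x\hat y^{2k+1}) + c_4(\hat x^{2k+2} - \hat y^{2k+2})$ with $\hat v_1\in P_m(\hK)$ containing no $\hat x^{2k+1}$ or $\hat y^{2k+1}$ monomial. Then each edge trace of $\hat v$ is a one-variable polynomial of degree $\le 2k+2$, and the contribution of $\hat v_1$ to any edge trace has degree $\le 2k$ (a monomial $\hat x^a\hat y^b$ with $a+b\le 2k+1$ restricting to $\hat x^{2k+1}$ would force $b=0$, i.e. the excluded monomial). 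Since all moments of order $\le 2k$ of $\hat v$ on a given edge vanish, that trace is $L^2([-1,1])$-orthogonal to $P_{2k}([-1,1])$; being of degree $\le 2k+2$ it must lie in $\text{span}\{L_{2k+1},L_{2k+2}\}$. So I would write $\hat v(\hat x,1) = \alpha_1 L_{2k+1}(\hat x) + \beta_1 L_{2k+2}(\hat x)$ and similarly on the other three edges with parameters $(\alpha_j,\beta_j)$, $j=2,3,4$.

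Next I would match the two top-order coefficients on each edge. Writing $c$, $d$ for the leading coefficients of $L_{2k+1}$, $L_{2k+2}$, and using that $L_{2k+2}$ is even while $L_{2k+1}$ is odd, the degree-$(2k+2)$ comparison gives $\beta_1=\beta_2=c_4/d$ and $\beta_3=\beta_4=-c_4/d$, and the degree-$(2k+1)$ comparison gives $\alpha_1 c = c_1+c_3$, $\alpha_2 c = c_1-c_3$, $\alpha_3 c = c_2-c_3$, $\alpha_4 c = c_2+c_3$, which is the exact analogue of \eqref{3.4}--\eqref{3.5}. Then I would impose the four corner-matching identities, equating the two values of $\hat v$ at each vertex obtained from the two incident edge traces and using $L_n(1)=1$, $L_n(-1)=(-1)^n$; this produces four linear relations among the $\alpha_j,\beta_j$, the analogue of \eqref{3.6}. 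Substituting the relations from the previous step collapses everything into a homogeneous linear system in $c_1,c_2,c_3,c_4$ whose only solution is $c_1=c_2=c_3=c_4=0$, whence all $\alpha_j=\beta_j=0$ and $\hat v=\hat v_1$ vanishes on $\partial\hK$.

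To finish, $\hat v\in P_{2k+1}(\hK)$ vanishing on $\partial\hK$ factors as $\hat v = (1-\hat x^2)(1-\hat y^2)\hat q$ with $\hat q\in P_{2k-3}(\hK)$, exactly as in the proofs of Theorems~\ref{Theorem2.1} and \ref{Theorem3.1}; since $\hat v$, and therefore $\hat q$, vanishes at the interior Lagrange points $I$, which are unisolvent for $P_{2k-3}(\hK)$, we get $\hat q\equiv 0$, i.e. $\hat v\equiv 0$. A dimension count confirms that this is a genuinely square set of functionals, $4(2k+1) + (2k-1)(k-1) = (2k+3)(k+1)+2 = \dim ER_m(\hK)$, so unlike the $G\cup I$ case no relation of the type \eqref{Rel} is hidden in the picture.

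The only genuinely new point compared with Theorem~\ref{Theorem3.1} is the first step: recognizing that vanishing moments of order $\le 2k$ pin each edge trace to the two-dimensional space $\text{span}\{L_{2k+1},L_{2k+2}\}$, rather than to $L_{2k+1}\cdot P_1([-1,1])$ as the point values did in Theorem~\ref{Theorem3.1}. Once that is in hand, the remaining argument is the same algebra as in Theorem~\ref{Theorem3.1} with one extra Legendre polynomial carried along; the corner bookkeeping in the middle step is the most error-prone part, but it is entirely routine.
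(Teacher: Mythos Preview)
Your proposal is correct and follows essentially the same route as the paper's proof. The one difference is in the first step: the paper splits each edge trace into its even and odd parts and argues separately that the even part is a multiple of $L_{2k+2}$ and the odd part a multiple of $L_{2k+1}$, whereas you reach the same conclusion $\hat v|_{\hat e}\in\operatorname{span}\{L_{2k+1},L_{2k+2}\}$ directly by Legendre orthogonality; after that, the leading-coefficient matching, corner relations, and bubble argument are identical in both proofs.
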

\begin{proof} By the definition of  $ER_m(\hK)$, $\hat{v}(\hat{x}, \hat{y})$ can be expressed as
\begin{equation*}
\hat{v}(\hat{x}, \hat{y})=\hat{v}_1(\hat{x}, \hat{y})+c_1\hat{x}^{2k+1}+c_2\hat{y}^{2k+1}+c_3(\hat{x}^{2k+1}\hat{y}-\hat{x}\hat{y}^{2k+1})+c_4(\hat{x}^{2k+2}-\hat{y}^{2k+2}),
\end{equation*}
where $\hat{v}_1(\hat{x}, \hat{y})\in P_{m}(\hK)\backslash\text{span}\{\hat{x}^{2k+1}, \hat{y}^{2k+1}\}$, and $c_i$, $i=1,\cdots, 4$,  are four interpolation parameters. Consider the restriction on edge $\hat{e}_2$ of $\hat{v}$, denoted by $\hat{w}$, which is a polynomial of degree
 $\leq 2k+2$. The function $\hat{w}$ can be decomposed as
 \begin{equation*}
 \hat{w}=\hat{w}_1+\hat{w}_2,
 \end{equation*}
 where
 \begin{equation*}
   \hat{w}_1=\sum\limits_{i=0}^{k+1}d_{2i}\hat{x}^{2i}, \text{ and }\hat{w}_2=\sum\limits_{i=0}^{k}d_{2i+1}\hat{x}^{2i+1},
 \end{equation*}
 where $d_i$, $i=0, 1, \cdots, 2(k+1)$, are interpolation constants for $\hat{w}$.  From degrees of freedom it follows
\begin{equation*}
 \int_{-1}^1 \hat{w}_1 \hat{x}^{2i} d \hat{x}=0, i=0, \cdots, k, \text{ and }\int_{-1}^1 \hat{w}_2 \hat{x}^{2i+1} d \hat{x}=0, i=0, \cdots, k-1.
\end{equation*}
Since $\hat{w}_1$ (resp. $\hat{w}_2$) is an even (odd) function on $[-1,1]$,  it holds that
\begin{equation*}
\int_{-1}^1 \hat{w}_1 \hat{x}^{2i+1} d \hat{x}=0, i=0, \cdots, k, \text{ and }\int_{-1}^1 \hat{w}_2 \hat{x}^{2i} d \hat{x}=0, i=0, \cdots, k.
\end{equation*}
Note that the degree of polynomial $\hat{w}_1$ (resp. $\hat{w}_2$) is not more than $2k+2$ (resp. $2k+1$).
Therefore both $\hat{w}_1$ and $\hat{w}_2$ are Legendre polynomials (up to multiplication constants), namely,
\begin{equation*}
\hat{w}_1=a_2L_{2k+2}(\hat{x}), \text{ and }\hat{w}_2=b_2L_{2k+1}(\hat{x}),
\end{equation*}
for two constants $a_2$ and $b_2$.  Similar arguments apply to the restrictions on the other three edges of $\hK$, which leads to
\begin{equation*}
\begin{split}
\hat{v}|_{\hat{e}_1}=a_1L_{2k+2}(\hat{y})+b_1L_{2k+1}(\hat{y}),\\
\hat{v}|_{\hat{e}_2}=a_2L_{2k+2}(\hat{x})+b_2L_{2k+1}(\hat{x}),\\
\hat{v}|_{\hat{e}_3}=a_3L_{2k+2}(\hat{y})+b_3L_{2k+1}(\hat{y}),\\
\hat{v}|_{\hat{e}_4}=a_4L_{2k+2}(\hat{x})+b_4L_{2k+1}(\hat{x}).
\end{split}
\end{equation*}
Since the coefficient before monomial $\hat{x}^{2k+2}$ is opposite to that before  monomial $\hat{y}^{2k+2}$ for $\hat{v}$,
 this  gives
\begin{equation}\label{3.14}
a_1=a_3=-a_2=-a_4.
\end{equation}
 Let $c \not=0$ be the coefficient of monomial $\hat{x}^{2k+1}$ of $L_{2k+1}(\hat{x})$.
A comparison of coefficients  for monomials  $\hat{x}^{2k+1}$, and $\hat{y}^{2k+1}$ in $\hat{v}|_{\hat{e}_i}$, $i=1, \cdots, 4$,
 and those in $\hat{v}$,  leads to
\begin{equation}\label{3.15}
\begin{split}
c_1+c_3=cb_4,\quad c_1-c_3=cb_2,\\
c_2-c_3=cb_3, \quad c_2+c_3=cb_1.
\end{split}
\end{equation}
 Note that $\hat{v}(\hat{x},1)$ and $\hat{v}(1,\hat{y})$ are equal at corner
$(1,1)$,  $\hat{v}(\hat{x},1)$ and $\hat{v}(-1,\hat{y})$ are equal at corner $(-1,1)$,
 $\hat{v}(\hat{x},-1)$ and $\hat{v}(1, \hat{y})$ are equal at corner $(1,-1)$, $\hat{v}(\hat{x}, -1)$ and  $\hat{v}(-1, \hat{y})$
are equal at corner $(-1,-1)$.
Since $e=L_{2k+1}(1)=-L_{2k+1}(-1)$ and $d=L_{2k+2}(1)=L_{2k+2}(-1)$,  this leads to
\begin{equation}\label{3.16}
\begin{split}
da_1+eb_1=da_4-eb_4 , \quad da_3+eb_3=da_4+eb_4,\\
da_3-eb_3=da_2ecb_2, \quad  da_1-eb_1=da_2-eb_2.
\end{split}
\end{equation}
It follows from \eqref{3.14}, \eqref{3.15} and \eqref{3.16} that
\begin{equation*}
a_1=a_2=a_3=a_4=b_1=b_2=b_3=b_4=c_1=c_2=c_3=0.
\end{equation*}
This implies that $c_4=0$ and $\hat{v}(\hat{x},\hat{y})$ vanishes on the boundary of $\hK$. Hence,
$$
\hat{v}(\hat{x},\hat{y})=\hat{b}(\hat{x},\hat{y})\hat{q}(\hat{x},\hat{y}) \text{ with } \hat{q}(\hat{x},\hat{y})\in P_{2k-3}(\hat{K}),
$$
where the bubble function $\hat{b}(\hat{x}, \hat{y})=(1-\hat{x}^2)(1-\hat{y}^2)$. Finally, since $\hat{v}(\hat{x},\hat{y})$ vanishes at points of $I$, $\hat{q}(\hat{x},\hat{y})\equiv 0$. This  completes the proof.

\end{proof}

\section{The third family of shape function spaces}

For nonconforming elements of even order,  there is always a discrete bubble
   function which vanishes at $4m$ Gauss-Legendre points.
Thus,  in additional to one extra term for $R_m$ of odd $m$,  we need another
   extra term enriching $P_m$ polynomials so that the finite element function
    has exactly $4m$ degrees of freedom on the element boundary.
For $m=2k$,  we define the third family of nonconforming elements by
\an{ \lab{R-m-p}
R_m^{+}(\hat{K}):=P_m(\hat{K})+\text{span}\{\hat{x}^m\hat{y}, \hat{x}\hat{y}^m\}.
}
For the continuity requirement,  we need use the $m=2k$ Gauss-Legendre points,
\a{  g_{-k},\dots, g_{-1}, g_1, \dots, g_{k}. }

\begin{lemma} Let $\hat{v}\in R_m^{+}$ be a polynomial over $\hK$
      such that its restrictions
     on four edges are polynomials of degree $\leq 2k $.
    Then it holds that
\begin{equation}\lab{Rel2}
 \begin{split}
 \sum\limits_{0\ne i=-k}^{k}\frac{ \hat{v}(1,g_i)-\hat{v}(-1,g_i)
    -\hat{v}(g_i,1)+\hat{v}(g_i,-1) }{g_i (1-g_i^2)
       \prod_{|i|\ne j=1}^k (g_i^2-g_j^2) } =0.
\end{split}
 \end{equation}
\end{lemma}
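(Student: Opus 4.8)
The plan is to adapt the argument that produced \eqref{Rel}, using the corner continuity of $\hat v$ together with the fact that, for $m=2k$, each edge of $\hK$ carries an \emph{even} number $2k$ of Gauss--Legendre nodes; this parity is precisely what makes the bookkeeping cleaner than in the odd-order case.

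First, every $\hat v\in R_m^{+}(\hK)$ restricts on each edge to a univariate polynomial of degree $\le 2k$, which is therefore determined by $2k+1$ nodal values, one more than the $2k$ available nodes $g_{-k},\dots,g_{-1},g_1,\dots,g_k$. I would therefore augment these by one endpoint, as in the proof of \eqref{Rel}, and work with two Lagrange bases of $P_{2k}([-1,1])$: the family $\{\mathcal L_i^0\}$ built from the $2k$ Gauss nodes together with $g_{-(k+1)}=-1$, and the family $\{\mathcal L_i^1\}$ built from the $2k$ Gauss nodes together with $g_{k+1}=1$. Pairing the factor indexed by $g_l$ with the one indexed by $-g_l$ and using that there are exactly $k$ such pairs yields the endpoint values
\[
\mathcal L_{-(k+1)}^0(1)=\prod_{j}\frac{1-g_j}{-1-g_j}=1,\qquad \mathcal L_{k+1}^1(-1)=\prod_{j}\frac{-1-g_j}{1-g_j}=1,
\]
with the products taken over the $2k$ Gauss indices. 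Moreover, comparing, for an arbitrary $p\in P_{2k}([-1,1])$, its $\{\mathcal L_i^0\}$-expansion evaluated at $1$ with its $\{\mathcal L_i^1\}$-expansion evaluated at $-1$ (and using that $p\mapsto(p(g_i))_i$ maps $P_{2k}([-1,1])$ onto $\R^{2k}$) forces $\mathcal L_i^1(-1)=-\mathcal L_i^0(1)$ for every Gauss index $i$.

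Next, at each of the four corners of $\hK$ I would express the value of $\hat v$ in two ways: interpolate the restriction of $\hat v$ to one of the two edges through that corner from the $2k$ Gauss nodes plus the opposite endpoint of that edge, do the same along the other edge, and equate. Writing $S_R=\sum_{0\ne i=-k}^{k}\mathcal L_i^0(1)\,\hat v(1,g_i)$ and defining $S_L,S_T,S_B$ analogously from $\hat v(-1,g_i)$, $\hat v(g_i,1)$, $\hat v(g_i,-1)$, the corner identities at $(1,1)$ and at $(-1,-1)$ become, after substituting $\mathcal L_{-(k+1)}^0(1)=\mathcal L_{k+1}^1(-1)=1$ and $\mathcal L_i^1(-1)=-\mathcal L_i^0(1)$,
\[
S_R-S_T=\hat v(-1,1)-\hat v(1,-1),\qquad S_L-S_B=\hat v(-1,1)-\hat v(1,-1).
\]
Subtracting these eliminates all four corner values and gives $S_R-S_L-S_T+S_B=0$, i.e.
\[
\sum_{0\ne i=-k}^{k}\mathcal L_i^0(1)\big(\hat v(1,g_i)-\hat v(-1,g_i)-\hat v(g_i,1)+\hat v(g_i,-1)\big)=0;
\]
the identities at the remaining two corners reproduce the same relation, a convenient consistency check.

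Finally, a direct evaluation (again pairing $g_l$ with $-g_l$) gives $\mathcal L_i^0(1)=\dfrac{1}{g_i}\prod_{|i|\ne l=1}^{k}\dfrac{1-g_l^2}{g_i^2-g_l^2}$, and multiplying numerator and denominator by $1-g_i^2$ rewrites this as $\Big(\prod_{l=1}^{k}(1-g_l^2)\Big)\cdot\dfrac{1}{g_i(1-g_i^2)\prod_{|i|\ne j=1}^{k}(g_i^2-g_j^2)}$. Since the Gauss nodes lie in $(-1,1)$, the common factor $\prod_{l=1}^{k}(1-g_l^2)$ is nonzero, so dividing the displayed relation by it produces exactly \eqref{Rel2}. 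I expect the only genuinely delicate point to be the sign bookkeeping — which endpoint to adjoin at each corner, and propagating the minus sign through $\mathcal L_i^1(-1)=-\mathcal L_i^0(1)$; once this and the two endpoint values equal to $1$ are established (this is exactly where the even parity of $2k$ enters), the two Lagrange-coefficient evaluations are routine.
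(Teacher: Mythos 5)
Your proposal is correct and follows essentially the same route as the paper's proof: augment the $2k$ Gauss--Legendre nodes by one endpoint to form the two Lagrange bases $\{\mathcal{L}_i^0\}$ and $\{\mathcal{L}_i^1\}$ of $P_{2k}([-1,1])$, use continuity of $\hat v$ at the corners to eliminate the corner values via $\mathcal{L}_{-(k+1)}^0(1)=\mathcal{L}_{k+1}^1(-1)$ and $\mathcal{L}_i^1(-1)=-\mathcal{L}_i^0(1)$, and then evaluate $\mathcal{L}_i^0(1)=\frac{1}{g_i}\prod_{|i|\ne l=1}^{k}\frac{1-g_l^2}{g_i^2-g_l^2}$ by pairing $g_l$ with $-g_l$, which yields \eqref{Rel2} after dividing by the nonzero common factor $\prod_{l=1}^{k}(1-g_l^2)$. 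The only cosmetic differences are that you work with two corner identities instead of the paper's four and justify $\mathcal{L}_i^1(-1)=-\mathcal{L}_i^0(1)$ by a surjectivity argument rather than by direct symmetry of the two node sets; both are valid and amount to the same computation.
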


\begin{proof} We add the point $g_{-k-1}=-1$ to the $2k$ Gauss-Legendre points,
and  define a first set of  Lagrange interpolation basis functions:
 \begin{equation*}
 \mathcal{L}_i^0(\hat{x})=\prod\limits_{0,i\not=j=-(k+1)}^{k}
       \frac{\hat{x}-g_j}{g_i-g_j},    \quad
         i=-(k+1), -k, \cdots,-1,1,\cdots,  k.
 \end{equation*}
At the other end, adding a point $g_{k+1}=1$, we
     define another set of  Lagrange interpolation basis functions:
 \begin{equation*}
 \mathcal{L}_i^1(\hat{x})=\prod\limits_{0,i\not=j=-k}^{k+1}
   \frac{\hat{x}-g_j}{g_i-g_j} , \quad i=-k, \cdots,-1,1,\cdots,  k+1.
 \end{equation*}
 Since
 \a{ P_{2k}([-1,1]) &=
    \text{span}\{\mathcal{L}_{i}^0(\hat{x}), \ i=-k-1, \cdots,-1,1,\cdots, k
   \} \\ & =\text{span}\{\mathcal{L}_{i}^1(\hat{x}),
   \ i=-k,  \cdots,-1,1,\cdots,  k+1\} , }
   and  the restriction of  $\hat{v}$  on each edge of $\hK$
    is a polynomial of degree $\leq 2k$,
  we have the four equations in \meqref{2.4}.
Again, let
  \a{ \alpha_i &= \mathcal{L}_i^0(1)=\begin{cases}
          \frac{1-(-1)}{g_i-(-1)} \prod_{0,i\not=j=-k}^{k}
	      \frac{1-g_j}{g_i-g_j}  &\hbox{if } i\ne -k-1, \\
	  \prod_{0\ne j=-k}^{k}
	      \frac{1-g_j}{-1-g_j}  &\hbox{if } i= -k-1, \end{cases}\\
	\beta_i &= \mathcal{L}_i^1(-1)=\begin{cases}
          \frac{-1-1}{g_i-1} \prod_{0,i\not=j=-k}^{k}
	      \frac{-1-g_j}{g_i-g_j}  &\hbox{if } i\ne k+1, \\
	  \prod_{0\ne j=-k}^{k}
	      \frac{-1-g_j}{ 1-g_j}  &\hbox{if } i= k+1. \end{cases}
 }
 Note that, because the Gauss-Legendre points are symmetric,
\a{ \alpha_{-k-1} = \beta_{k+1}, \quad \hbox{and } \
    \alpha_i = -\beta_i \ \hbox{ if } \ i=-k,\cdots, -1,1,\dots, k.
   }
 By the continuity of $\hat v$ at the four corner vertexes,
      it yields
\a{   \alpha_{-k-1}\hat v(-1,-1) +
              \sum_{0\ne i=-k}^k \alpha_{i}\hat v(g_i,-1)
	  & =\beta_{k+1}\hat v( 1, 1) +
              \sum_{0\ne i=-k}^k \beta_{i}\hat v(1,g_i), 
	\\
     \alpha_{-k-1}\hat v(-1,1) +
              \sum_{0\ne i=-k}^k \alpha_{i}\hat v(g_i,1)
	   &=\alpha_{-k-1}\hat v( 1,-1) +
              \sum_{0\ne i=-k}^k \alpha_{i}\hat v(1,g_i), 
	\\
      \beta_{ k+1}\hat v(1,1) +
              \sum_{0\ne i=-k}^k \beta_{i}\hat v(g_i,1)
	   &=\alpha_{-k-1}\hat v(-1,-1) +
              \sum_{0\ne i=-k}^k \alpha_{i}\hat v(-1,g_i), 
	 \\
      \beta_{ k+1}\hat v( 1,-1) +
              \sum_{0\ne i=-k}^k \beta_{i}\hat v(g_i,-1) 
	   &=\beta_{ k+1}\hat v(-1, 1) +
              \sum_{0\ne i=-k}^k \beta_{i}\hat v(-1,g_i). }
Eliminating four corner values of $\hat v$,  we get
  \a{
 \sum\limits_{0\ne i=-k}^{k} \alpha_i \big(\hat{v}(1,g_i)-\hat{v}(-1,g_i)
     -\hat{v}(g_i,1)+\hat{v}(g_i,-1)\big)=0. }
 This is simplified to \meqref{Rel2}.
 \end{proof}

 To define  degrees of freedom for this space, let
 the set of even Gauss-Legendre points on four edges of $\hK$ be
\a{
G^+:=\{(1, g_i),(-1,g_i), (g_i,1), (g_i, -1), \  0\ne i=-k, \cdots, k\}.
  }
 To define other  degrees of freedom in the interior of $\hK$,  let
\a{
  I^+:=\{(\hat{x}_\ell, \hat{y}_\ell), \ell=1, \cdots, (2k-3)(k-1)\}
} be a set of interior points of $\hat{K}$,  the standard Lagrange points
   inside the reference element  $\hat{K}$,
   so that any polynomial $\hat{q}(\hat{x},\hat{y})\in P_{2k-4}(\hat{K})$
    can be uniquely defined by its values at points in $I^+$.

\begin{theorem} If $\hat{v}(\hat{x},\hat{y})\in R_m^+(\hK)$
   vanishes at all points in $G^+ \cup I^+\cup\{(1,1)\}$,
   then $\hat{v}(\hat{x},\hat{y})\equiv 0$.
\end{theorem}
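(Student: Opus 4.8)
The plan is to imitate the three-stage argument used in the proofs of Theorems~\ref{Theorem2.1}, \ref{Theorem3.1} and \ref{Theorem3.5}: first force $\hat v$ to vanish on the whole boundary $\partial\hK$ using the boundary nodes, then factor out the bubble $\hat b(\hat x,\hat y)=(1-\hat x^2)(1-\hat y^2)$, and finally annihilate the cofactor with the interior nodes $I^+$. The only genuinely new feature is the parity bookkeeping forced by $m=2k$ being even, which in fact makes the boundary step \emph{cleaner} than in the odd-order cases.

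\emph{Stage 1: vanishing on $\partial\hK$.} Write $\hat v=\hat v_1+c_1\hat x^{2k}\hat y+c_2\hat x\hat y^{2k}$ with $\hat v_1\in P_{2k}(\hK)$. Since each serendipity monomial restricts to a univariate polynomial of degree $\le 2k$ on every edge, so does $\hat v$; and a univariate polynomial of degree $\le 2k$ vanishing at the $2k$ points $g_{-k},\dots,g_{-1},g_1,\dots,g_k$ (the zeros of $L_{2k}$) is a scalar multiple of $L_{2k}$. Hence $\hat v(1,\hat y)=c_rL_{2k}(\hat y)$, $\hat v(-1,\hat y)=c_lL_{2k}(\hat y)$, $\hat v(\hat x,1)=c_tL_{2k}(\hat x)$, $\hat v(\hat x,-1)=c_bL_{2k}(\hat x)$. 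Evaluating at the four corners, the two edge restrictions meeting at a corner must give the same value; since $L_{2k}$ is \emph{even}, $L_{2k}(1)=L_{2k}(-1)=1\neq0$, so all four corner identities collapse to $c_r=c_l=c_t=c_b=:c$. Then the supplementary nodal condition $\hat v(1,1)=0$ reads $cL_{2k}(1)=c=0$, so $c=0$ and $\hat v$ vanishes on $\partial\hK$. (As in Theorems~\ref{Theorem2.1}--\ref{Theorem3.5}, the relation \eqref{Rel2} is not used in the proof itself; it explains why a single extra node $(1,1)$ --- rather than four --- is the correct supplement to $G^+$.)

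\emph{Stages 2 and 3: bubble and interior.} From $\hat v|_{\partial\hK}=0$ and coprimality of the factors $\hat x\mp1$, $\hat y\mp1$ we get $\hat v=\hat b\,\hat q$ for a polynomial $\hat q$, with $\deg\hat q=\deg\hat v-4\le(2k+1)-4=2k-3$. I would rule out $\deg\hat q=2k-3$ by matching degree-$(2k+1)$ parts: that of $\hat v$ is $c_1\hat x^{2k}\hat y+c_2\hat x\hat y^{2k}$, while that of $\hat b\hat q$ is $\hat x^2\hat y^2$ times the leading homogeneous part of $\hat q$ and is hence divisible by $\hat x^2\hat y^2$ --- but the only element of $\operatorname{span}\{\hat x^{2k}\hat y,\hat x\hat y^{2k}\}$ divisible by $\hat x^2\hat y^2$ is $0$, which would force the leading part of $\hat q$ to vanish, a contradiction. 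Thus $\hat q\in P_{2k-4}(\hK)$. Since $I^+$ consists of $(2k-3)(k-1)=\dim P_{2k-4}(\hK)$ points unisolvent for $P_{2k-4}(\hK)$ and $\hat b\neq0$ in the interior, the vanishing of $\hat v$ on $I^+$ gives $\hat q\equiv0$, so $\hat v\equiv0$. (For $k=1$ there are no interior nodes and $\deg\hat q\le-1$ already yields $\hat q\equiv0$, in accordance with the discrete-bubble remark preceding \eqref{R-m-p}.)

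The step I expect to need the most care is the corner argument in Stage 1: one must keep track of the parity of $L_{2k}$ --- so that $L_{2k}(1)=L_{2k}(-1)$, unlike the odd-order cases where a sign intervenes (cf.\ \eqref{ex1}--\eqref{ex2} and \eqref{3.6}) --- in order to see that the four edge constants are forced equal and hence that one node at $(1,1)$ annihilates them simultaneously. The degree bookkeeping in Stage 2 that upgrades $\deg\hat q\le 2k-3$ to $\hat q\in P_{2k-4}(\hK)$ is the other spot requiring a short genuine argument rather than a routine computation.
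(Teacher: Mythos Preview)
Your proof is correct and follows the same three-stage scheme as the paper's. The only differences are cosmetic: in Stage~1 the paper is slightly more direct, observing that on $\hat y=1$ the degree-$2k$ restriction already has $2k+1$ zeros (the $2k$ Gauss points together with $(1,1)$) and hence vanishes, then propagating around the boundary via corner continuity; and your leading-term matching in Stage~2 is precisely the ``careful division'' the paper invokes without spelling out.
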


\begin{proof}
The function $\hat{v}(\hat{x},\hat{y})\in R_m^+(\hK)$ is a $P_{2k}$ polynomial
   when restricted to $\hat y=1$.
On the edge $\hat y=1$,  $\hat{v}$ vanishes at $2k$
  Gauss-Legendre points plus a corner point $\{(1,1)\}$.
So  $\hat{v}\equiv 0$ on $\hat y=1$.
Repeating the argument,  as $\hat v$ is continuous at the four corners,
  we find $\hat{v}\equiv 0$ on the whole boundary.
\a{ \hat{v}(\hat{x},\hat{y})   =\hat{b}(\hat{x},\hat{y})\hat{q}(\hat{x},\hat{y})
   \text{ with } \hat{q}(\hat{x},\hat{y})\in P_{2k-4}(\hat{K}),
   }
where  $\hat{b}(\hat{x}, \hat{y})=(1-\hat{x}^2)(1-\hat{y}^2)$.
  Here with a careful division, we find the coefficients for $x^my$ and $y^mx$
   in $\hat v$ are zero.  So $\hat{q}(\hat{x},\hat{y})\in P_{2k-4}(\hat{K}).$
Finally, since $\hat{v}(\hat{x},\hat{y})$ vanishes at points of $I$,
    $\hat{q}(\hat{x},\hat{y})\equiv 0$. This  completes the proof.
\end{proof}

The above theorem implies that any $\hat{v}(\hat{x},\hat{y})\in R_m^+(\hK)$
   can be    uniquely  determined by
    its values at  points in $G^+\cup I^+\cup\{(1,1)\}$.
But the number of points is one more
      than the dimension of $R_m^+(\hat{K})$.
The relation \eqref{Rel2}  implies that the number of linearly
    independent functionals defined
 for the space $R_m^+(\hK)$ is equal to the dimension of the space.
  This  motivates the following degrees of freedom
   for the shape function space $R_m^+(\hK)$:
\begin{itemize}
\item values at points in $G^+$ which satisfy the relation \eqref{Rel2};
\item value  at point  $(1,1)$;
\item values  at points in $I^+$.
\end{itemize}
 Note that the value at point $(1,1)$ is to determine the coefficient of
    the discrete bubble function in $\hat v$:
\a{ \hat b_0 (\hat x, \hat y)  =
    \prod_{i=1}^k ( \hat x^2 +\hat y^2 - 1 - g_i^2). }

\section{Nonconforming finite element spaces}
This section defines  nonconforming quadrilateral element spaces.

\subsection{Quadrilateral Mesh}

Let $\mathcal{T}_h:=\{ K_{i}, i=1,\cdots , Ne\}$ be a shape regular
quadrilateral partition of $\Omega$ with diam$(K_{i})\leq h$.
We assume that the partition $\mathcal{T}_{h}$   satisfies the bisection condition of \cite{Shi84}:
The distance $d_{K}$ between the midpoints of two
diagonals of each element $K$  is of order $\mathcal{O}(h^{2})$.

\vskip 0.2cm
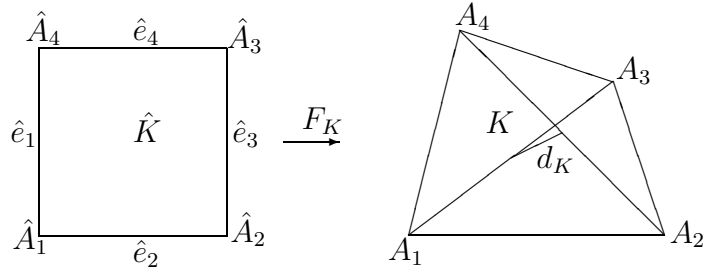
\begin{figure}[h]
\begin{center}
\setlength{\unitlength}{0.25cm}
\begin{picture}(26,13.8)(0,0)
\put(0,1.3){\begin{picture}(22,13)(0,0)
\put (-2,0){\line(0,1){10}}
\put (-2,0){\line(1,0){10}}
\put (-2,10){\line(1,0){10}}
\put (8,0){\line(0,1){10}}
\put(-3.4,-0.7){$\hat{A}_{1}$}
\put(8.2,-0.3){$\hat{A}_{2}$}
\put(8,10){$\hat{A}_{3}$}
\put(-2.7,10.4){$\hat{A}_{4}$}

\put(3,-1.4){$\hat{e}_{2}$}
\put(8.3,5){$\hat{e}_{3}$}
\put(3,10.4){$\hat{e}_{4}$}
\put(-3.5,5){$\hat{e}_{1}$}
\put(3,5){$\hat{K}$}

\put(11,5){\vector(1,0){3}}\put(12,5.7){$F_K$}
\end{picture}
}

\setlength{\unitlength}{0.34cm}

\put(0,-1){\begin{picture}(22,13)(0,0)
\put(13,2){\line(1,4){2}}
\put(13,2){\line(1,0){10}}
\put(23,2){\line(-1,3){2}}
\put(15,10){\line(3,-1){6}}
\put(13,2){\line(4,3){8}}
\put(23,2){\line(-1,1){8}}
\put(17,5){\line(2,1){2}}
\put(12.2,1.0){$A_{1}$}
\put(23.2,1.7){$A_{2}$}
\put(21.1,8.1){$A_{3}$}
\put(14.8,10.2){$A_{4}$}
\put(18,4.6){$d_{K}$}
\put(16,6){$K$}

\end{picture}
}
\end{picture}
\end{center}
\caption{The reference element $\hK$ and a quadrilateral element $K$.}\label{figure1}
\end{figure}

For a given element $K\in \mathcal{T}_h$,  its four  nodes are denoted by
$A_i(x_i,y_i),i=1,\cdots ,4$ in  the counterclockwise order.
Let $\hK:=[-1,1]^2$ denote the reference element with nodes
$\hat{A}_i(\hat{x}_i,\hat{y}_i)$, $i=1,\cdots ,4$, shown in Figure \mref{figure1}.
   Define the bilinear
    transformation  $F_K:\hat{K} \rightarrow K$ by
$$
x=\sum\limits_{i=1}^{4}x_iN_i(\hat{x},\hat{y}),
\quad y=\sum\limits_{i=1}^{4}y_iN_i(\hat{x},\hat{y}),\quad (\hat{x},\hat{y})\in\hat{K},
$$
where $N_i(\hat{x},\hat{y}),i=1,2,3,4$ are the bilinear basis functions,
which can be written as
\ben
N_1(\hat{x},\hat{y})&=&\frac{1}{4}(1-\hat{x})(1-\hat{y}),
\quad N_2(\hat{x},\hat{y})=\frac{1}{4}(1+\hat{x})(1-\hat{y}),\\
N_3(\hat{x},\hat{y})&=&\frac{1}{4}(1+\hat{x})(1+\hat{y}),
\quad N_4(\hat{x},\hat{y})=\frac{1}{4}(1-\hat{x})(1+\hat{y}).
\een

\subsection{Nonconforming finite element spaces and dimensions}

  For an odd integer $m=2k+1>0$, define  nonconforming
 finite element spaces by
 \begin{equation}
 \begin{split} \lab{R-h}
 R_h:=\{v\in L^2(\Omega), v|_{K}\circ F_{K}^{-1}\in R_m(\hK)\text{ for any } K\in\mathcal{T}_h, v\text{ is  continuous }  \\\text{
  at $m$ Gauss-Legendre points of each interior edge of } \mathcal{T}_h \}.
\end{split}
 \end{equation}

\begin{equation}
 \begin{split}\lab{E-R-h-P}
 ER_h^P:=\{v\in L^2(\Omega), v|_{K}\circ F_{K}^{-1}\in ER_m(\hK)\text{ for any } K\in\mathcal{T}_h, v\text{ is  continuous
   } \\ \text{ at $m$ Gauss-Legendre points of each interior edge of } \mathcal{T}_h \}.
\end{split}
 \end{equation}

\begin{equation}
 \begin{split}\lab{E-R-h-M}
 ER_h^M:=\{v\in L^2(\Omega), v|_{K}\circ F_{K}^{-1}\in ER_m(\hK)\text{ for any } K\in\mathcal{T}_h,
  \int_e[v]qds=0\\ \text{ for any }q\in P_{2k}(e) \text{ for each interior edge $e$ of } \mathcal{T}_h \},
\end{split}
 \end{equation}
 where $[v]$ denotes the jump of $v$ across edge $e$.
 We note again that $ER_h^M$ is different from $ER_h^P$ due to
  the higher order polynomial term $(x^{2k+2}- y^{2k+2})$.
For an even integer $m=2k$,  the nonconforming finite element space is defined by
\begin{equation}
 \begin{split} \lab{R-h-p}
 R_h^+:=\{v\in L^2(\Omega), v|_{K}\circ F_{K}^{-1}\in R_m^+(\hK)
      \text{ for any } K\in\mathcal{T}_h, v\text{ is  continuous }  \\
   \text{
  at $m$ Gauss-Legendre points of each interior edge of } \mathcal{T}_h \}.
\end{split}
 \end{equation}

The corresponding homogeneous spaces are defined, respectively,

\an{\lab{R-h-0}
 R_{h,0}:&=\{v \in R_h, v\text{ vanishes  at $m$ Gauss-Legendre points }\\
    \nonumber&\qquad
    \text{ of each boundary edge $e$ of } \mathcal{T}_h \}.
     \\
      \lab{ER-P-h-0}
 ER_{h, 0}^P&:=\{v\in ER_h^P, v\text{ vanishes  at $m$ Gauss-Legendre points }\\
    \nonumber&\qquad
    \text{ of each boundary edge $e$ of } \mathcal{T}_h\}.
    \\
  \lab{ER-M-h-0}
 ER_{h, 0}^M&:=\{v\in ER_h^M, \int_e v qds=0 \text{ for any }q\in P_{2k}(e) \\
   \nonumber&\qquad
   \text{ for each boundary edge $e$ of } \mathcal{T}_h \}.
   \\
     \lab{R-h-p-0}
 R_{h,0}^+&:=\{v \in R_h^+, v\text{ vanishes  at $m$ Gauss-Legendre points }\\
    \nonumber&\qquad
    \text{ of each boundary edge $e$ of } \mathcal{T}_h \}.
   }

\subsection{Approximations of nonconforming finite element spaces}

Given $K\in\mathcal{T}_h$,  define
\begin{equation}
G_K:=G\circ F_{K}, \text{ and }I_K:=I\circ F_K.
\end{equation}
Then,  define  the canonical interpolation operator $\Pi_{ER}^P:
    H^2(\Omega)\rightarrow ER_h^P$ by
\an{ \lab{I-P}
   (\Pi_{ER}^P v|_K)(p)=v|_K(p) \text{ for any }p\in G_K\cup I_K
     \text{ and }K\in\mathcal{T}_h
     }
for any $v\in H^2(\Omega)$.

Define  the canonical interpolation operator
    $\Pi_{ER}^M: H^2(\Omega)\rightarrow ER_h^P$ by
\an{\lab{I-M}
  \ad{ (\Pi_{ER}^M  v|_K)(p) &=v|_K(p) \quad \text{ for any }p\in I_K,\\
  \\\int_e \Pi_{ER}^M  v|_e ds&=\int_e vds \quad
     \text{ for any }e\subset \partial K,
    } }
 for any $K\in\mathcal{T}_h$ and  $v\in H^2(\Omega)$.

To define an interpolation operator for $R_h$,  let $\Pi^Q$ be
     the canonical interpolation operator of the
conforming $Q_m$ element space $Q_{m,h}:=\{v\in H^1(\Omega), v|_K\circ F_K^{-1} \in Q_m(\hK), K\in \mathcal{T}_h \}$.
 Then,  define  an interpolation operator $\Pi_R: H^2(\Omega)\rightarrow R_h$ by
\an{\lab{I-R}
(\Pi_R v|_K)(p)=\Pi^Q v|_K(p) \text{ for any }p\in G_K \cup I_K
     \text{ and }K\in\mathcal{T}_h
     }
for any $v\in H^2(\Omega)$. Since the values at points in $G_K$ of $\Pi^Q v$
   satisfy the relation \eqref{Rel}, this operator
     is well-defined.

For the even order nonconforming finite elements,
   we define
\an{\lab{I-R-P}  (\Pi_{R^+} v|_K)(p)=\Pi^Q v|_K(p) \quad
    \text{ for any }p\in G^+_K \cup I^+_K\cup \{F_K(1,1)\}
     \text{ and } K\in\mathcal{T}_h
     }
for any $v\in H^2(\Omega)$. Since the values of
   $Q_m$ polynomial  $\Pi^Q v$ at points in $G_K^+$ satisfy
   the constraint \meqref{Rel2}, the
   operator $\Pi_{R^+} :  H^2(\Omega) \to  R_h^+$ is well-defined.

An immediate  consequence of these interpolation operators is the
    following approximation property
\begin{equation}
\inf\limits_{v_h\in V_h}\|\nabla_h (u-v_h)\|_0\leq Ch^m\|u\|_{m+1},
\end{equation}
provided that $u\in H^{m+1}(\Omega)$ and the mesh satisfies the
  bisection condition where $V_h=R_h, ER_h^P, ER_h^M, R_h^+$.

\subsection{The full space of nonconforming finite elements}

For the finite element spaces $ER_{h,0}^P$ and $ER_{h,0}^M$,  local  nodal basis
  functions are uniquely determined,  which are glued together to form
  the global nodal basis functions.
Thus,  the interpolation operators $\Pi_{ER}^P$ and $\Pi_{ER}^M$,
   cf. \meqref{I-P} and  \meqref{I-M}, are on-to mappings, by which
    the dimension of the finite element spaces can be counted.

Let $N_{V}$, $N_S$, and $N_{E}$  denote the numbers of vertexes, edges
    and elements of the partition ${\mathcal T}_h$, respectively.
Let $N_{V}^i$, $N_V^b$, $N_S^i$  and  $N_{S}^{b}$
   denote the numbers of interior vertexes, interior edges,
    boundary vertexes  and boundary edges,
    respectively.
 The dimensions are
\begin{equation}
     \dim ER_{h,0}^P =\dim ER_{h,0}^M =
      N_E (2k-1)(k-1)+ N_S^i(2k+1).
\end{equation}

However,  for nonconforming spaces $R_{h,0}$ and $R_{h,0}^+$,
   the local nodal basis is not unique.
The canonical basis functions are not linearly independent but subject
   to the constraints \meqref{Rel} and \meqref{Rel2}.
When gluing these functions together to form the global space,
   it is not clear if they can form a basis for the full space.
In other words,  the local constraints  \meqref{Rel} (on each element)
   might be linearly dependent globally, that is,  some constraints
   may be automatically satisfied when the neighboring elements are subject
  to the constraints.
If so,  the interpolation operators $\Pi_{R} $ and $\Pi_{R^+} $
   in \meqref{I-R} and \meqref{I-R-P} are not on-to mappings.
Then,  the computational finite element spaces are not the full spaces
   defined mathematically in \meqref{R-h} and \meqref{R-h-p}, but subspaces.
This problem is not well-addressed in previous research.
We give a rigorous analysis.

\begin{theorem} The interpolation operators are on-to mappings that
  \an{ \lab{full} \Pi_{R}(Q_{m,h} \cap H^1_0(\Omega)) &= R_{h,0}, \\
       \Pi_{R^+}(Q_{m, h}\cap H^1_0(\Omega)) &= R^+_{h,0}.
   }
   The dimensions of these spaces are
       \an{\lab{d-0} \dim  R_{h,0} &=
            N_E ((2k-1)(k-1) ) + N_V^i + N_S^i (2k) , \\
	    \dim  R^+_{h,0} &=
            N_E ((2k-3)(k-1)+1) + N_V^i + N_S^i  (2k-1) .
	    }
\end{theorem}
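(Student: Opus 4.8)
The plan is to prove the two surjectivity statements and then read off the dimensions by a counting argument, treating the odd-order case $R_{h,0}$ in detail and indicating the analogous modifications for $R_{h,0}^+$. First I would observe that $\Pi_R$ maps $Q_{m,h}\cap H^1_0(\Omega)$ \emph{into} $R_{h,0}$: this is essentially by construction, since $\Pi_R v|_K$ agrees with the $Q_m$-interpolant $\Pi^Q v$ at the points of $G_K\cup I_K$, continuity across interior edges and vanishing at boundary Gauss--Legendre points are inherited from $\Pi^Q v\in H^1_0(\Omega)$, and Theorem~\ref{Theorem2.1} guarantees that the target value pattern uniquely determines an element of $R_m(\hK)$ on each $K$. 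The real content is surjectivity. Given $w_h\in R_{h,0}$, I want to produce $v\in Q_{m,h}\cap H^1_0(\Omega)$ with $\Pi_R v = w_h$; equivalently, since the canonical degrees of freedom (values at $G_K\cup I_K$ subject to \eqref{Rel}) determine $w_h$, it suffices to find a \emph{globally continuous} $Q_m$ function whose nodal values at all Gauss--Legendre edge points and all interior Lagrange points match those of $w_h$.

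The key step is a local-to-global construction on the reference/conforming side. On each edge $e$ of $\mathcal{T}_h$, the restriction of $w_h$ is a degree-$m=2k+1$ polynomial (in the edge parameter), well-defined and single-valued because $w_h$ is continuous at the $m$ Gauss--Legendre points of $e$ and a degree-$m$ polynomial on an edge is determined by those $m$ values \emph{together with} one more datum; here the extra datum comes precisely from the relation \eqref{Rel}, which on each element ties the four corner values to the edge Gauss--Legendre values. I would argue that \eqref{Rel}, holding on every element, forces the corner values of the two elements sharing an edge to be consistent, so that one may assign a single value of $v$ at every vertex of $\mathcal{T}_h$ (taking $v=0$ at boundary vertices, using the boundary vanishing conditions). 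Once vertex values and edge Gauss--Legendre values are fixed consistently, the edge restriction of the sought $Q_m$ function $v$ is pinned down on every edge; interior $Q_m$ degrees of freedom on each element are then free and can be chosen to match the interior Lagrange values of $w_h$ at points of $I_K$ (note $\dim P_{2k-3}=\#I$ while the interior $Q_m$ bubbles $(1-\hat x^2)(1-\hat y^2)Q_{2k-3}(\hK)$ have dimension $(2k)^2-4(2k)+\dots$, so one must verify there is enough freedom — this is where the $Q_m$ versus $R_m$ dimension bookkeeping enters). Assembling these edge and interior choices gives a well-defined $v\in Q_{m,h}$, continuous across edges by agreement of edge polynomials, lying in $H^1_0(\Omega)$ by the boundary conditions, and satisfying $\Pi_R v=w_h$ by Theorem~\ref{Theorem2.1}.

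For the dimension formulas I would then count the degrees of freedom of the \emph{target} description of $R_{h,0}$: on each interior edge, $2k$ independent values after accounting for the two vertex endpoints (giving $N_S^i(2k)$), one value per interior vertex ($N_V^i$), and on each element the interior Lagrange values contributing $(2k-1)(k-1)=\dim P_{2k-3}(\hK)$, for a total $N_E(2k-1)(k-1)+N_V^i+N_S^i(2k)$; surjectivity of $\Pi_R$ certifies that none of the element-wise constraints \eqref{Rel} is globally redundant beyond what the vertex/edge bookkeeping already reflects, so this count is exact. The even-order case is identical in structure: one uses Theorem~4.\emph{(the $R_m^+$ unisolvency)}, the relation \eqref{Rel2}, the extra datum at the corner $(1,1)$ which fixes the coefficient of the discrete bubble $\hat b_0$, and $I^+$ with $\dim P_{2k-4}(\hK)=(2k-3)(k-1)$ plus the one bubble dimension, yielding $N_E((2k-3)(k-1)+1)+N_V^i+N_S^i(2k-1)$. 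The main obstacle I anticipate is the consistency-of-corner-values step: making rigorous that the single element-wise relation \eqref{Rel} (and \eqref{Rel2}) is exactly what is needed so that the vertex values of $v$ can be chosen globally and single-valuedly, i.e.\ that the local constraints neither overdetermine nor underdetermine the global gluing — equivalently, that the canonical basis functions, though locally dependent, assemble to span the full space. This is precisely the point the paper flags as not previously addressed, and it is where the $Q_m$-lifting device (choosing $v$ in the conforming space rather than directly in $R_h$) does the real work, since continuity of a $Q_m$ function is automatic once edge traces agree.
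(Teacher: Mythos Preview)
There is a genuine gap, and it is exactly where you flag it: the ``consistency-of-corner-values'' step. Your argument rests on the claim that relation \eqref{Rel}, holding on each element, forces the corner values of $w_h\in R_{h,0}$ to agree across adjacent elements, so that these corner values can serve as vertex values for the conforming lift $v\in Q_{m,h}$. This is false. Relation \eqref{Rel} is a single linear constraint among the $4m$ edge Gauss--Legendre values only; it does not involve corner values at all. It is true that for a fixed $\hat v\in R_m(\hat K)$ the corner values are determined by the Gauss--Legendre values (the proof of Theorem~\ref{Theorem2.1} shows that vanishing on $G$ forces vanishing on $\partial\hat K$), but there is no mechanism forcing the element-wise corner values of $w_h$ to match across a shared vertex: continuity at vertices is precisely what $R_{h,0}$ does \emph{not} impose, and a generic $w_h\in R_{h,0}$ is discontinuous there. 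So you cannot simply read off a globally single-valued set of vertex values for $v$ from $w_h$. If instead you treat the vertex values $\{v_P\}$ as unknowns and try to fit a degree-$(2k+1)$ edge polynomial through the $2k+1$ Gauss--Legendre data plus the two endpoints, you are imposing $2k+3$ conditions on a $(2k+2)$-dimensional space: one linear compatibility relation per edge among the $\{v_P\}$. Showing this global system is solvable is the actual content of the theorem, and your sketch does not address it; the remark that ``continuity of a $Q_m$ function is automatic once edge traces agree'' begs the question, since getting the edge traces to agree at vertices \emph{is} the issue.

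The paper sidesteps this direct construction entirely and argues by dimension counting in the opposite direction. It introduces the fully discontinuous space $R_h^d$, defines on it the family of jump/boundary-evaluation functionals $f_i$ at all edge Gauss--Legendre points, so that $R_{h,0}$ is their common kernel, and then shows that after removing one boundary functional $f_0$ the remaining $N_S(2k+1)-1$ functionals are linearly independent. The independence proof is an element-by-element propagation: testing against $v_h\in R_h^d$ supported on a single element, the only linear dependence allowed on that element is the one given by \eqref{Rel}, and since one coefficient ($c_0$) is already zero on the starting element, all coefficients there vanish; one then moves to neighbouring elements. This yields $\dim R_{h,0}=\dim R_h^d-(N_S(2k+1)-1)$, which after the Euler-type identities $4N_E=2N_S^i+N_S^b$, $2N_E=2N_V^i+N_V^b-2$, $N_V^b=N_S^b$ matches the count for $\operatorname{Range}(\Pi_R)$, and surjectivity follows without ever constructing an explicit preimage.
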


\begin{proof}  We analyze the space $R_{h,0}$ only as that for $R^+_{h,0}$
  is the same.   $R_{h,0}$ is defined as the full
   space.  But the range of the interpolation operator is a subspace
   \a{  \Pi_R (Q_{m, h} \cap H^1_0(\Omega)) \subset
     R_{h,0}. }
   To show that the two spaces are equal,  we show that
    the dimension of the range of $\Pi_R$ is no less than
    that of the full space $R_{h,0}$.
   The former is easily counted by the definition of the operator
      $\Pi_R$ in \meqref{I-R},
   \an{\lab{d-r}\dim \text{Range}(\Pi_R)=
            N_E ((2k-1)(k-1) ) + N_V^i + N_S^i (2k). }

   We define a completely discontinuous space
   \a{  R_{h}^d = \{ v\in L^2(\Omega) \mid
       v|_K\circ F_{K}^{-1} \in R_m(\hat K) \quad \forall K\in\mathcal{T}_h \}. }
    Then\an{\lab{d-d}
        \dim R_{h}^d = N_E \cdot (\dim P_{2k+1} + 1 )
                  = N_E ( (2k+3)(k+1)+1 ). }
    Now, we introduce linear functionals on the space $R_h^d$:
   \an{   f_i \ &: \ R_h^d \to R^1, \\
       \lab{f-i}  f_i(v) &= \begin{cases}
	   v(g_i)
	     & \text{ for any } g_i \in G_K\cap \partial\Omega,
	      \\ v(g_i^+)-v(g_i^-)
	     & \text{ for rest } g_i \in G_K\cap  \Omega^i, \end{cases}
	 }
for any $K \in  \mathcal{T}_h$,
   where we randomly choose a plus side for each edge, and
    $v(g_i^+)$ and $v(g_i^+)$ are values of $v$ on the two sides
     of an edge where $g_i$ is a Gauss-Legendre point, defined
    in \meqref{G}.
  The nonconforming finite element space $R_h$ is the kernel space
   of the space of above linear functionals.

  Assume the domain is simply connected.  Let $g_i$ be any one
   boundary Gauss-Legendre point.  Let the linear functional
   evaluating at this point be $f_0$.   We show next that
   value of $f_0(v)=v(g_i)$ is completely determined by the
   rest nodal values for all $v\in R_h$.    For example,
    if the domain consists of one element $K$,
  then $v(g_i)=\gamma_i^{-1}\sum_{j\ne i}-\gamma_j v(g_j)$,
    cf. \meqref{Rel}.
   We define a set for the rest linear functionals in \meqref{f-i}
    \an{\lab{set} \mathcal{F}=\{ f_i \mid  i=1,2,\cdots, N_S(2k+1)-1 \}.}
Assume
    \an{\lab{dependent}  \sum_{i=1}^{\dim \mathcal{F} } c_i f_i =0. }
    That is
	\a{     \sum_{i=1}^{\dim \mathcal{F}} c_i f_i(v_h) = 0
	\quad \forall v_h \in  R_{h}^d. }
  For convenience,  we let $c_0=0$.
   Let $v_h\in R_h^d$ and $v_h=0$ on all elements except the element where
     $f_0$ is defined.
   We have then
   \a{   0=\sum_{i=0}^{\dim \mathcal{F}} c_i f_i(v_h) = \sum_{i=0}^{4m-1} c_i (\pm v_h(g_i)).}
 By the constraint \meqref{Rel},
   \a{ c_i = c\gamma_i  \hbox{ \ or \ } -c\gamma_i }
 for some uniform constant $c$.  But above $\{c_i\}$ contains $c_0$ which is $0$.
 Thus all $c_i$ related to the element $K$ are $0$.
 In this fashion, we know that all $c_i$ in \meqref{dependent} are $0$ and
   the set $\mathcal{F}$ is linearly independent.
  Then the dimension of its kernel is $\dim \mathcal{F}$.
  By \meqref{d-d} and \meqref{set}, \meqref{d-0} holds,
  \a{ \dim R_{h,0} &= \dim R_h^d - \dim \mathcal{F}\\
      &=N_E ( (2k+3)(k+1)+1 ) - (N_S(2k+1)-1) \\
      &= N_E ( (2k-1)(k-1) )+ N_S^i(2k) + N_V^i, }
where we used the relations $4N_E=2N_S^i + N_S^b$,  $2 N_E = 2N_V^i +　N_V^b -2$
   and $ N_V^b = N_S^b $. \meqref{full} follows \meqref{d-r} and \meqref{d-0}.
\end{proof}

\section{The analysis of consistency errors}

 To analyze  consistency errors, we need some additional
   interpolation operators.
On the interval $[-1,1]$, define two $L^2$ projection operators
    $\mathcal{P}_{m-i}: L^2([-1,1])\rightarrow P_{m-i}([-1,1])$,
for $i=1, 2$, respectively,
\begin{equation}\lab{p-m-i}
\int_{-1}^1\mathcal{P}_{m-i} w\, qds=\int_{-1}^1 w\, qds
    \text{ for any }q\in P_{m-i}([-1,1]), i=1,2.
\end{equation}
for any $w\in L^2([-1, 1])$. Define the interpolation
     $\Pi_{m-1}: C^0([-1, 1])\rightarrow P_{m-1}([-1, 1])$ by
\begin{equation}\lab{pi-m-1}
(\Pi_{m-1}v)(g_i)=v(g_i), i=-k, \cdots, k, \text{ for any }v\in C^0([-1, 1]),
\end{equation}
where $g_i$ are Gauss-Legendre points on the interval $[-1, 1]$.
We recall that $\hat{e}_1$ and $\hat{e}_3$ be two edge of $\hK$
    that parallel to the $\hat{y}$  axis, see Figure \ref{figure1}.

Since the analysis of  consistency errors for both $R_{h, 0}$ and $ER_{h, 0}^P$ is similar, only details for $ER_{h,0}^P$
 are presented as follows.

\begin{lemma}\label{Lemma4.1} Let $\Pi_{m-1}$ be the interpolation operator defined
  in \meqref{pi-m-1}.
 Then it holds that
\begin{equation*}
\bigg|\int_{\hat{e}_3} \hat{u} \, \big(\hat{v} -\Pi_{m-1}\hat{v}|_{\hat{e}_3}\big)d\hat{y}
 -\int_{\hat{e}_1} \hat{u} \, \big(\hat{v} -\Pi_{m-1}\hat{v}|_{\hat{e}_1}\big) d\hat{y}\bigg|
\leq C|\hat{u}|_{H^{m}(\hK)}|\hat{v}|_{H^{m}(\hK)},
\end{equation*}
for any $\hat{u}\in H^m(\hK)$ and $\hat{v}\in ER_m(\hK)$.
\end{lemma}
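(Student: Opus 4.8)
The plan is to bound the bilinear expression by exploiting the fact that the two edge integrals can be combined into a single integral over the reference square after using the interpolation relation. First I would observe that the key structural fact is the constraint in Lemma~\ref{Rel}: for $\hat v \in ER_m(\hK)$, the restrictions to the four edges are polynomials of degree $\le 2k+2 = m+1$, so the Gauss-Legendre interpolant $\Pi_{m-1}\hat v|_{\hat e_i}$ captures all but the top two Legendre modes on each edge. Thus $\hat v|_{\hat e_3} - \Pi_{m-1}\hat v|_{\hat e_3}$ and $\hat v|_{\hat e_1} - \Pi_{m-1}\hat v|_{\hat e_1}$ each lie in $\operatorname{span}\{L_{2k+1}(\hat y), L_{2k+2}(\hat y)\}$ (up to the standard orthogonality computation: the Gauss-Legendre interpolation error of a polynomial of degree $\le 2k+1$ vanishes, and for degree $2k+2$ it is a multiple of $L_{2k+2}$, since $\Pi_{m-1}$ reproduces $P_{m-1}$ and the $2k$ Gauss points are the roots of $L_{2k}$... more precisely one checks the error is $a L_{2k+1} + b L_{2k+2}$ directly from \eqref{3.3}).

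The main idea: from the explicit edge formulas \eqref{3.3}, $\hat v|_{\hat e_1}$ and $\hat v|_{\hat e_3}$ differ only through the coefficients $(a_3,b_3)$ versus $(a_4,b_4)$, and crucially these coefficients are controlled by $c_3$ and $c_4$ — the coefficients of the two extra shape functions $\hat x^{2k+1}\hat y - \hat x \hat y^{2k+1}$ and $\hat x^{2k+2}-\hat y^{2k+2}$. I would therefore write the difference of the two edge-interpolation errors as $\int_{-1}^1 \hat u(\cdot) \big(\alpha L_{2k+1}(\hat y) + \beta L_{2k+2}(\hat y)\big) d\hat y$ with $|\alpha| + |\beta| \le C(|c_3| + |c_4|)$, and then bound $|c_3|, |c_4| \le C |\hat v|_{H^m(\hK)}$. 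The latter follows because on the finite-dimensional space $ER_m(\hK)$ the map $\hat v \mapsto (c_3, c_4)$ is linear, and vanishes on $P_{m-1}(\hK)$ (indeed $c_3, c_4$ multiply monomials of degree $m$ and $m+1$), so $|c_3|+|c_4|$ is a seminorm that is dominated by $|\hat v|_{H^m(\hK)}$ by norm equivalence on the quotient $ER_m(\hK)/P_{m-1}(\hK)$ — here one must check $|\cdot|_{H^m(\hK)}$ is genuinely a norm on that quotient, which holds since any $\hat v \in ER_m$ with $|\hat v|_{H^m}=0$ is in $P_{m-1}$.

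Having reduced to $\big|\int_{-1}^1 \hat u(1,\hat y)\,\phi(\hat y)\,d\hat y - \int_{-1}^1 \hat u(-1,\hat y)\,\phi(\hat y)\,d\hat y\big|$ with $\phi \in \operatorname{span}\{L_{2k+1}, L_{2k+2}\}$ and $\|\phi\|_{L^2[-1,1]} \le C(|c_3|+|c_4|)$, I would rewrite this as $\int_{\hK} \frac{\partial}{\partial \hat x}\big(\hat u(\hat x,\hat y)\phi(\hat y)\big)\,d\hat x\,d\hat y = \int_{\hK} \frac{\partial \hat u}{\partial \hat x}(\hat x,\hat y)\,\phi(\hat y)\,d\hat x\,d\hat y$. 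Now since $\phi$ is $L^2$-orthogonal to $P_{2k}[-1,1] = P_{m-1}[-1,1]$, I can subtract from $\partial_{\hat x}\hat u$ its $\hat y$-direction $L^2$-projection onto $P_{m-1}$ (or more symmetrically use a full tensor projection), getting $\int_{\hK}(\partial_{\hat x}\hat u - \mathcal P_{m-1}^{\hat y}\partial_{\hat x}\hat u)\,\phi\,d\hat x\,d\hat y$, and then Cauchy–Schwarz plus a standard Bramble–Hilbert/approximation estimate $\|\partial_{\hat x}\hat u - \mathcal P_{m-1}^{\hat y}\partial_{\hat x}\hat u\|_{L^2(\hK)} \le C |\hat u|_{H^m(\hK)}$ finishes the bound, with the $\|\phi\|_{L^2} \le C(|c_3|+|c_4|) \le C|\hat v|_{H^m(\hK)}$ factor supplying the second seminorm. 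I expect the main obstacle to be the bookkeeping in the first reduction: extracting cleanly from \eqref{3.3}–\eqref{3.5} that the edge-interpolation errors on $\hat e_1$ and $\hat e_3$ land in $\operatorname{span}\{L_{2k+1},L_{2k+2}\}$ with coefficients bounded by $|c_3|+|c_4|$, and verifying that the trace/projection constants are genuinely independent of everything (they are, since all of this is on the fixed reference element $\hK$). The Bramble–Hilbert step and the Cauchy–Schwarz step are routine.
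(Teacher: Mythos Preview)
Your overall strategy (reduce to edge errors, exploit orthogonality, then Bramble--Hilbert) is sound, but there is a genuine gap in the structural claim you rely on. The interpolation error $\hat v|_{\hat e_i}-\Pi_{m-1}\hat v|_{\hat e_i}$ does \emph{not} lie in $\operatorname{span}\{L_{2k+1},L_{2k+2}\}$. Since $\Pi_{m-1}=\Pi_{2k}$ interpolates at the $2k+1$ roots of $L_{2k+1}$ and returns a polynomial of degree $\le 2k$, the error of a degree-$(2k+1)$ polynomial is a nonzero multiple of $L_{2k+1}$ (not zero, as you write), and the error of a degree-$(2k+2)$ polynomial is $L_{2k+1}(\hat y)(a\hat y+b)$. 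By the three-term recurrence, $\hat y L_{2k+1}=\tfrac{2k+2}{4k+3}L_{2k+2}+\tfrac{2k+1}{4k+3}L_{2k}$, so the error carries an $L_{2k}$ component whenever $a\neq 0$ (i.e.\ whenever $c_4\neq 0$). Consequently your $\phi$ is \emph{not} orthogonal to $P_{m-1}=P_{2k}$, and the projection step $\int_{\hK}(\partial_{\hat x}\hat u-\mathcal P_{m-1}^{\hat y}\partial_{\hat x}\hat u)\,\phi$ does not recover the full integral. A second, related issue: the errors on $\hat e_1$ and $\hat e_3$ are not the same function (their $L_{2k+1}$ coefficients differ, being proportional to $c_2\pm c_3$), so the reduction to a single $\phi$ in the fundamental-theorem-of-calculus step is not justified as written.

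The paper's proof circumvents both issues by splitting in the opposite order. First it writes $\hat u=(I-\mathcal P_{m-1})\hat u+\mathcal P_{m-1}\hat u$ on each edge; the $(I-\mathcal P_{m-1})\hat u$ piece is handled directly by Cauchy--Schwarz, trace, and Bramble--Hilbert. For the $\mathcal P_{m-1}\hat u$ piece it then decomposes $\hat v=\hat v_1+\hat v_2$ with $\hat v_1\in R_m(\hK)$ and $\hat v_2=c_0(\hat x^{2k+2}-\hat y^{2k+2})$. The point is that (i) the edge restrictions of $\hat v_1$ have degree $\le 2k+1$, so their interpolation errors are pure multiples of $L_{2k+1}$ and hence orthogonal to $\mathcal P_{m-1}\hat u$, killing that term exactly; and (ii) $\hat v_2|_{\hat e_1}=\hat v_2|_{\hat e_3}$, so the two edge errors for $\hat v_2$ genuinely coincide, which legitimizes combining the integrals against $\hat u|_{\hat e_3}-\hat u|_{\hat e_1}$ and then applying one more projection $(I-\mathcal P_{m-2})$ before Bramble--Hilbert. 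Your argument can be repaired along the same lines: separate off the $c_4(\hat x^{2k+2}-\hat y^{2k+2})$ contribution (which produces the troublesome $L_{2k}$ component but is identical on both edges) from the rest of $\hat v$ (whose edge error is a pure $L_{2k+1}$), and treat the two pieces by different mechanisms.
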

\begin{proof} Let $\mathcal{P}_{m-1}$ be the $L^2$ projection operator defined
   in \meqref{p-m-i}. By which, we use the following decomposition
\a{
\int_{\hat{e}_3} \hat{u} \, \big(\hat{v} -\Pi_{m-1}\hat{v}|_{\hat{e}_3}\big)d\hat{y}
 -\int_{\hat{e}_1} \hat{u} \, \big(\hat{v} -\Pi_{m-1}\hat{v}|_{\hat{e}_1}\big) d\hat{y}
 =:I_1+I_2}
where
\a{I_1 &=\int_{-1}^1(I-\mathcal{P}_{m-1})\hat{u}|_{\hat{e}_3}\, \big(\hat{v}|_{\hat{e}_3}
    -\Pi_{m-1}\hat{v}|_{\hat{e}_3}\big)d\hat{y}\\
   &\qquad
-\int_{-1}^1(I-\mathcal{P}_{m-1})\hat{u}|_{\hat{e}_1}\,
   \big(\hat{v}|_{\hat{e}_1}-\Pi_{m-1}\hat{v}|_{\hat{e}_1}\big) d\hat{y}, \\
  I_2&= \int_{-1}^1\mathcal{P}_{m-1}\hat{u}|_{\hat{e}_3}\,
       \big(\hat{v}|_{\hat{e}_3}-\Pi_{m-1}\hat{v}|_{\hat{e}_3}\big)d\hat{y}\\
   &\qquad -\int_{-1}^1\mathcal{P}_{m-1}\hat{u}|_{\hat{e}_1}\,
     \big(\hat{v}|_{\hat{e}_1}-\Pi_{m-1}\hat{v}|_{\hat{e}_1}\big) d\hat{y} .
    }
The first term $I_1$ can be bounded by the Cauchy-Schwarz inequality,
the trace theorem and the usual Bramble-Hilbert lemma,
 \begin{equation*}
   |I_1|\leq C|\hat{u}|_{H^{m}(\hK)}|\hat{v}|_{H^m(\hK)}.
 \end{equation*}
 To analyze the second term $I_2$, introduce the following decomposition for $\hat{v}$
 \begin{equation*}
 \hat{v}=\hat{v}_1+\hat{v}_2,
 \end{equation*}
 where $\hat{v}_1\in P_{m}(\hK)+\text{span}\{\hat{x}^{2k+1}\hat{y}
    -\hat{x}\hat{y}^{2k+1}\}$, and $\hat{v}_2=c_0(\hat{x}^{2k+2}-\hat{y}^{2k+2})$
   with $c_0$,  an interpolation constant.
Since $\hat{v}_1|_{\hat{e}_i}-\Pi_{m-1}\hat{v}_1|_{\hat{e}_i}$, $i=1, 3$,
   are  polynomials of degree at most $2k+1$,
 and vanish at $2k+1$ Gauss-Legendre points of $\hat{e}_i$,
  \begin{equation*}
  \hat{v}_1|_{\hat{e}_i}-\Pi_{m-1}\hat{v}_1|_{\hat{e}_i}=c_iL_{2k+1}(\hat{y}),
  \end{equation*}
     with two constants $c_i$, $i=1, 3$. Since  degrees of polynomials
     $\mathcal{P}_{m-1}\hat{u}|_{\hat{e}_i}$, $i=1, 3$, are
     not more than $2k$,  this gives
  \begin{equation}\label{3.10}
  \int_{-1}^1\mathcal{P}_{m-1}\hat{u}|_{\hat{e}_3}\,
     \big(I-\Pi_{m-1}\big)\hat{v}_1|_{\hat{e}_3}d\hat{y}
   -\int_{-1}^1\mathcal{P}_{m-1}\hat{u}|_{\hat{e}_1}\,
    \big(I-\Pi_{m-1}\big)\hat{v}_1|_{\hat{e}_1} d\hat{y}=0.
  \end{equation}
  Since $\hat{v}_2|_{\hat{e}_3}=\hat{v}_2|_{\hat{e}_1}$,
     it holds that $(I-\Pi_{m-1})\hat{v}_2|_{\hat{e}_3}
     =(I-\Pi_{m-1})\hat{v}_2|_{\hat{e}_1}$.
 Hence,
  \begin{equation}\label{3.11}
  \begin{split}
&\quad \ \int_{-1}^1\mathcal{P}_{m-1}\hat{u}|_{\hat{e}_3}\,
      \big(I-\Pi_{m-1}\big)\hat{v}_2|_{\hat{e}_3}d\hat{y}
   -\int_{-1}^1\mathcal{P}_{m-1}\hat{u}|_{\hat{e}_1}\,
    \big(I-\Pi_{m-1}\big)\hat{v}_2|_{\hat{e}_1} d\hat{y}\\
&=\int_{-1}^1\mathcal{P}_{m-1}\big(\hat{u}|_{\hat{e}_3}-\hat{u}|_{\hat{e}_1}\big)\,
  \big(I-\Pi_{m-1}\big)\hat{v}_2|_{\hat{e}_3}d\hat{y}.
  \end{split}
  \end{equation}
From  facts that  $(I-\Pi_{m-1})\hat{v}_2|_{\hat{e}_3}$
     is a polynomial of degree $\leq 2k+2$ and vanishes
at $2k+1$ Gauss-Legendre points of $\hat{e}_3$, it follows
\begin{equation}\label{3.12}
\begin{split}
&\quad \
    \int_{-1}^1\mathcal{P}_{m-1}\big(\hat{u}|_{\hat{e}_3}-\hat{u}|_{\hat{e}_1}\big)\,
    \big(I-\Pi_{m-1}\big)\hat{v}_2|_{\hat{e}_3}d\hat{y}\\
&=\int_{-1}^1(I-\mathcal{P}_{m-2})\mathcal{P}_{m-1}
    \big(\hat{u}|_{\hat{e}_3}-\hat{u}|_{\hat{e}_1}\big)\,
       \big(I-\Pi_{m-1}\big)\hat{v}_2|_{\hat{e}_3}d\hat{y}.
\end{split}
\end{equation}
It is straightforward to see that the right hand side of
     \eqref{3.12} vanishes for all $\hat{u}\in P_{m-1}(\hK)$, which
leads to
\begin{equation}\label{3.13}
\begin{split}
&\quad \ \bigg| \int_{-1}^1\mathcal{P}_{m-1}\big(\hat{u}|_{\hat{e}_3}
       -\hat{u}|_{\hat{e}_1}\big)\, \big(I-\Pi_{m-1}\big)\hat{v}_2|_{\hat{e}_3}d\hat{y}\bigg|\\
&\leq C\,|c_0||\hat{u}|_{H^m(\hK)}\leq C |\hat{u}|_{H^m(\hK)} |\hat{v}|_{H^{m+1}(\hK)}.
\end{split}
\end{equation}
A summary of \eqref{3.10}, \eqref{3.11}, \eqref{3.12}, and \eqref{3.13} proves
\begin{equation*}
|I_2|\leq C |\hat{u}|_{H^m(\hK)} |\hat{v}|_{H^{m}(\hK)},
\end{equation*}
which completes the proof.
\end{proof}

Let $\hat{e}_2$ and $\hat{e}_4$ be two edge of $\hK$ that parallel to the $\hat{x}$  axis, see figure \ref{figure1}.
 A similar argument of the above lemma can prove the following result.

\begin{lemma}\label{Lemma4.2}
Let $\Pi_{m-1}$ be the interpolation operator defined in \meqref{pi-m-1}.
   Then it holds that
  \a{
\bigg |\int_{\hat{e}_4}\hat{u} \big(\hat{v} -\Pi_{m-1}\hat{v}|_{\hat{e}_4}\big)d\hat{x}
  -\int_{\hat{e}_2} \hat{u} \big(\hat{v} -\Pi_{m-1}\hat{v}|_{\hat{e}_2}\big) d\hat{x}\bigg|
\leq C|\hat{u}|_{H^{m}(\hK)}|\hat{v}|_{H^{m}(\hK)}. }
for any $\hat{u}\in H^m(\hK)$ and $\hat{v}\in ER_m(\hK)$.
\end{lemma}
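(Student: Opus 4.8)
The plan is to mirror the argument of Lemma~\ref{Lemma4.1} with the roles of the horizontal and vertical edges interchanged. First I would write the difference of integrals over $\hat e_4$ and $\hat e_2$ and split it, using the $L^2$ projection $\mathcal{P}_{m-1}$ on $[-1,1]$, into $I_1+I_2$ exactly as before: $I_1$ pairs $(I-\mathcal{P}_{m-1})\hat u$ against $(I-\Pi_{m-1})\hat v$ on each of $\hat e_2,\hat e_4$, and $I_2$ pairs $\mathcal{P}_{m-1}\hat u$ against $(I-\Pi_{m-1})\hat v$. The term $I_1$ is handled verbatim by Cauchy--Schwarz, the trace theorem, and the Bramble--Hilbert lemma, giving $|I_1|\le C|\hat u|_{H^m(\hK)}|\hat v|_{H^m(\hK)}$.

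For $I_2$ I would again decompose $\hat v=\hat v_1+\hat v_2$ with $\hat v_1\in P_m(\hK)+\mathrm{span}\{\hat x^{2k+1}\hat y-\hat x\hat y^{2k+1}\}$ and $\hat v_2=c_0(\hat x^{2k+2}-\hat y^{2k+2})$. The key structural facts carry over: the restriction of $\hat v_1$ to $\hat e_2$ or $\hat e_4$ is a polynomial of degree $\le 2k+1$, so $(I-\Pi_{m-1})\hat v_1|_{\hat e_i}$ is a multiple of $L_{2k+1}(\hat x)$ and is $L^2$-orthogonal to $\mathcal{P}_{m-1}\hat u|_{\hat e_i}\in P_{2k}$, hence the $\hat v_1$ contribution vanishes. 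For $\hat v_2$, the essential observation is that $\hat v_2$ restricted to $\hat e_2$ (the line $\hat y=-1$) equals $c_0(\hat x^{2k+2}-1)$ and restricted to $\hat e_4$ ($\hat y=1$) equals $c_0(\hat x^{2k+2}-1)$ as well, so $\hat v_2|_{\hat e_2}=\hat v_2|_{\hat e_4}$ and therefore $(I-\Pi_{m-1})\hat v_2|_{\hat e_2}=(I-\Pi_{m-1})\hat v_2|_{\hat e_4}$. This lets me combine the two $\hat v_2$-integrals into a single integral of $\mathcal{P}_{m-1}(\hat u|_{\hat e_4}-\hat u|_{\hat e_2})$ against $(I-\Pi_{m-1})\hat v_2|_{\hat e_4}$. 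Inserting $(I-\mathcal{P}_{m-2})$ in front (legitimate because $(I-\Pi_{m-1})\hat v_2|_{\hat e_4}$ is degree $\le 2k+2$ and vanishes at the $2k+1$ Gauss--Legendre points, so it is orthogonal to $P_{m-2}=P_{2k-1}$), the integrand vanishes whenever $\hat u\in P_{m-1}$, and one more Bramble--Hilbert estimate plus $|c_0|\le C|\hat v|_{H^{m+1}(\hK)}$ (equivalently $\le C|\hat v|_{H^m(\hK)}$ up to the inverse inequality on the finite-dimensional space $ER_m(\hK)$) closes the bound.

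I do not expect any genuine obstacle: the proof is a symmetry transcription of Lemma~\ref{Lemma4.1}. The one point demanding a moment's care is the parity bookkeeping — in Lemma~\ref{Lemma4.1} the integration variable on $\hat e_1,\hat e_3$ is $\hat y$ and the mismatch term is $\hat y^{2k+2}$, whereas here the variable on $\hat e_2,\hat e_4$ is $\hat x$ and the relevant monomial is $\hat x^{2k+2}$; one must verify that $\hat v_2$ still takes equal values on the two parallel edges (it does, since both are the curves $\hat y=\pm 1$ along which $\hat y^{2k+2}=1$), so the cancellation mechanism is identical. Everything else — the degree counts $2k+1$ and $2k+2$, the orthogonality to $P_{2k}$ and $P_{2k-1}$, and the final Bramble--Hilbert application — is unchanged, so I would simply state that "a similar argument" applies and, if desired, spell out only the $\hat v_2|_{\hat e_2}=\hat v_2|_{\hat e_4}$ identity as the lone nontrivial check.
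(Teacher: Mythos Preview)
Your proposal is correct and is exactly what the paper intends: it simply states that ``a similar argument of the above lemma can prove the following result,'' and your transcription of the Lemma~\ref{Lemma4.1} proof with $\hat e_1,\hat e_3$ replaced by $\hat e_2,\hat e_4$ (and $\hat y$ by $\hat x$) is precisely that argument. Your explicit check that $\hat v_2|_{\hat e_2}=\hat v_2|_{\hat e_4}=c_0(\hat x^{2k+2}-1)$ is the only place where the symmetry needs verification, and you handled it correctly.
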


\begin{theorem} Suppose that  the quadrilateral mesh $\cT_h$ satisfies the bi-section condition. Then it holds that
\begin{equation}
\sup\limits_{0\not=v_h\in ER_{h,0}^P}\frac{\sum\limits_{e\in\mathcal{E}_{h}}\int_e \frac{\partial u}{\partial n}[v_h]ds}{\|\nabla_h v_h\|_0}
\leq Ch^{m}|u|_{m+1},
\end{equation}
for any $u\in H^{m+1}(\Omega)$.
\end{theorem}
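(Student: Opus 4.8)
The plan is to rewrite the consistency term by element-wise integration by parts and then, after pulling back to the reference square $\hK$, to reduce it to the interpolation inequalities of Lemmas~\ref{Lemma4.1} and~\ref{Lemma4.2}, using the bisection condition to control the non-affine part of the geometry. First, since $u\in H^{m+1}(\Omega)$ with $m\ge 1$ its normal derivative is single-valued across interior edges, and since every $v_h\in ER_{h,0}^P$ vanishes at the $m$ Gauss--Legendre points of each boundary edge, the numerator equals, up to sign, $\sum_{K\in\mathcal{T}_h}\int_{\partial K}\frac{\partial u}{\partial n_K}\,v_h|_K\,ds$. On each edge $e$ the interpolant $\Pi_{m-1}v_h|_e$ of \eqref{pi-m-1} vanishes on boundary edges and coincides for the two one-sided traces on each interior edge, so it cancels when neighbouring contributions are added; hence the numerator equals $\sum_{K}\sum_{e\subset\partial K}\int_e\frac{\partial u}{\partial n_K}\bigl(v_h|_K-\Pi_{m-1}v_h|_e\bigr)\,ds$, which is now local on each $K$ and in which $v_h|_K-\Pi_{m-1}v_h|_e$ vanishes at the $m$ Gauss--Legendre points of $e$.

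Next I would fix $K$, set $\hat v:=v_h|_K\circ F_K\in ER_m(\hK)$, and transform the four edge integrals to $\hK$, grouping them into the two pairs of parallel edges. The pair $\{\hat e_1,\hat e_3\}$ gives $\int_{\hat e_3}\hat w_3\bigl(\hat v-\Pi_{m-1}\hat v|_{\hat e_3}\bigr)\,d\hat y+\int_{\hat e_1}\hat w_1\bigl(\hat v-\Pi_{m-1}\hat v|_{\hat e_1}\bigr)\,d\hat y$, where $\hat w_i$ is the restriction to $\hat e_i$ of $(\nabla u)\circ F_K$ contracted with the constant scaled outward normal $\bn_i$ of the affine map $F_K|_{\hat e_i}$ (with $\bn_3=\tfrac12(A_3-A_2)^{\perp}$ and $\bn_1=\tfrac12(A_1-A_4)^{\perp}$), and the pair $\{\hat e_2,\hat e_4\}$ gives the analogue in $\hat x$. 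Introducing the single $H^m(\hK)$-function $\hat u:=\bigl((\nabla u)\circ F_K\bigr)\cdot\bn_3$, which restricts to $\hat w_3$ on $\hat e_3$, and using the elementary identity $\bn_1+\bn_3=(M_{13}-M_{24})^{\perp}$, where $M_{13},M_{24}$ are the midpoints of the two diagonals of $K$, the $\{\hat e_1,\hat e_3\}$-contribution becomes $\bigl[\int_{\hat e_3}\hat u\bigl(\hat v-\Pi_{m-1}\hat v|_{\hat e_3}\bigr)\,d\hat y-\int_{\hat e_1}\hat u\bigl(\hat v-\Pi_{m-1}\hat v|_{\hat e_1}\bigr)\,d\hat y\bigr]+\int_{\hat e_1}\hat r\bigl(\hat v-\Pi_{m-1}\hat v|_{\hat e_1}\bigr)\,d\hat y$ with $\hat r:=\bigl((\nabla u)\circ F_K|_{\hat e_1}\bigr)\cdot(M_{13}-M_{24})^{\perp}$. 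The first bracket is exactly of the form treated in Lemma~\ref{Lemma4.1}; the pair $\{\hat e_2,\hat e_4\}$ is handled the same way by Lemma~\ref{Lemma4.2}.

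Then I would estimate the two pieces and scale back. By Lemmas~\ref{Lemma4.1}--\ref{Lemma4.2} the bracketed part is at most $C|\hat u|_{H^m(\hK)}|\hat v|_{H^m(\hK)}$; the standard (bilinear) scaling gives $|\hat u|_{H^m(\hK)}\le C|\bn_3|\,|(\nabla u)\circ F_K|_{H^m(\hK)}\le Ch^m|u|_{m+1,K}$, while norm equivalence on the finite-dimensional space $ER_m(\hK)$ (on which $|\cdot|_{H^m(\hK)}\le C|\cdot|_{H^1(\hK)}$) combined with scaling gives $|\hat v|_{H^m(\hK)}\le C\|\nabla v_h\|_{0,K}$. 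For the residual I would use that $\bigl(\hat v-\Pi_{m-1}\hat v\bigr)|_{\hat e_1}$, being a polynomial of degree $\le m+1$ vanishing at the $m$ Gauss--Legendre points, equals $L_m$ times a linear factor and is therefore $L^2([-1,1])$-orthogonal to $P_{m-2}$; replacing $\hat r$ by $(I-\mathcal{P}_{m-2})\hat r$ from \eqref{p-m-i}, the Cauchy--Schwarz inequality, the Bramble--Hilbert lemma, the bound $|M_{13}-M_{24}|=d_K=\mathcal{O}(h^2)$ of the bisection condition, and $\|\hat v-\Pi_{m-1}\hat v\|_{0,\hat e_1}\le C\|\nabla v_h\|_{0,K}$ bound the residual by $Ch^m|u|_{m+1,K}\|\nabla v_h\|_{0,K}$ as well. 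Summing these element contributions over $K$ and applying the Cauchy--Schwarz inequality in the element index yields $\bigl|\sum_{e}\int_e\frac{\partial u}{\partial n}[v_h]\,ds\bigr|\le Ch^m|u|_{m+1}\|\nabla_h v_h\|_0$, and dividing by $\|\nabla_h v_h\|_0$ completes the proof.

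The \emph{main obstacle} is the residual term: after the bilinear pull-back the weights on opposite edges do not match, and one has to show their mismatch is small enough to be absorbed into $Ch^m|u|_{m+1}$. This is exactly where the bisection condition enters --- through the identity $\bn_1+\bn_3=(M_{13}-M_{24})^{\perp}$ with $|M_{13}-M_{24}|=d_K=\mathcal{O}(h^2)$ --- and it must be combined with the orthogonality of $L_m$ to $P_{m-2}$, which is available because the edge traces of functions in $ER_m(\hK)$ that vanish at the $m$ Gauss--Legendre points are $L_m$ times a linear polynomial; without this combination one would only obtain $h^{m-1/2}$ from a naive edgewise Cauchy--Schwarz estimate. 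A secondary, purely technical, difficulty is the bookkeeping of the powers of $h$ (and of which Sobolev norm of $u$ appears) when passing from the reference-element seminorms of $\hat u$ and $\hat r$ back to $K$.
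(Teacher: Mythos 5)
Your proposal is correct and takes essentially the same route as the paper, whose proof of this theorem is only a one-line appeal to Lemmas \ref{Lemma4.1}--\ref{Lemma4.2} plus the element-wise edge-pairing and bisection-condition argument of Hu--Shi and Shi that you carry out explicitly (subtracting the single-valued Gauss-point interpolant, pulling back to $\hK$, pairing opposite edges so the matched part is exactly the quantity in the lemmas, and absorbing the mismatch of the scaled normals through $d_K=\mathcal{O}(h^2)$ and the orthogonality of $L_m(a\hat y+b)$ to $P_{m-2}$). The only caveat is bookkeeping: on genuinely bilinear (non-parallelogram) elements the scalings $|\hat u|_{H^m(\hK)}\le Ch^m|u|_{m+1,K}$ and the residual estimate naturally yield $h^m\|u\|_{m+1,K}$ rather than the pure seminorm, since lower-order derivatives of $u$ enter through the chain rule and the $(I-\mathcal{P}_{m-2})$ step --- a point the paper itself glosses over.
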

\begin{proof} Based on Lemmas \ref{Lemma4.1} and \ref{Lemma4.2}, the proof  follows from similar procedures used in \cite{HuShi(2005),Shi84}.
\end{proof}
\begin{remark}
Similar arguments  can show  similar estimates for consistency
   error for spaces $R_{h,0}$, $ER_{h,0}^M$ and $R^+_{h,0}$.
\end{remark}

\begin{theorem}  Let $u$ and $u_h$ be the exact solution and finite element solution
   of the Poisson equation,  respectively,
 \a{  (\nabla u, \nabla v) &= (f, v) \quad \forall v \in H^{r+1}(\Omega)
             \cap H^1_0(\Omega), \\
	 ( \nabla u_h, \nabla v) &= (f, v) \quad \forall v \in V_h, }
   where $V_h= R_{h,0}$, $ER_{h,0}^P$, $ER_{h,0}^M$ or $R^+_{h,0}$.
  Then \a{  |u-u_h|_{H^1_h} \le C h^{\min \{r, m\} }  |u|_{H^{r}(\Omega)},}
  where $m=2k-1$ for $V_h= R_{h,0}$, $ER_{h,0}^P$ or $ER_{h,0}^M$,
       and $m=2k$ for $V_h=R^+_{h,0}$.
\end{theorem}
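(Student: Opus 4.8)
The plan is to obtain the estimate from the classical second Strang lemma for nonconforming methods, feeding in the approximation property of Section~5 and the consistency estimate of the preceding theorem as the two ingredients. Set $a_h(v,w):=\sum_{K\in\mathcal T_h}\int_K\na v\cdot\na w\dx$, so that $a_h(u_h,w_h)=(f,w_h)$ for all $w_h\in V_h$. Each of the spaces $R_{h,0},ER_{h,0}^P,ER_{h,0}^M,R^+_{h,0}$ imposes the matching (pointwise or integral) edge continuity together with the homogeneous boundary data, which yields a broken Poincar\'e inequality on $V_h$; hence $a_h$ is coercive on $V_h$, $u_h$ is well defined, and
\[
|u-u_h|_{H^1_h}\le C\Big(\inf_{v_h\in V_h}|u-v_h|_{H^1_h}+\sup_{0\ne w_h\in V_h}\frac{|a_h(u,w_h)-(f,w_h)|}{|w_h|_{H^1_h}}\Big).
\]

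First I would bound the approximation term by choosing $v_h$ to be the image of $u$ under the canonical interpolation operator of the space at hand ($\Pi_R$, $\Pi_{ER}^P$, $\Pi_{ER}^M$ or $\Pi_{R^+}$ from Section~5). On the reference square this local operator reproduces $P_m(\hK)$, because $P_m(\hK)$ is contained in each of $R_m(\hK)$, $ER_m(\hK)$, $R_m^+(\hK)$; a Bramble--Hilbert argument then bounds the local interpolation error in the $H^1(\hK)$ seminorm by $C|\hat u|_{H^{s}(\hK)}$ over the admissible range of~$s$. Mapping back through $F_K$, invoking the bisection condition so that $F_K$ is an $O(h^2)$ perturbation of an affine map and the standard scaling applies, and summing over all $K\in\mathcal T_h$ gives $\inf_{v_h\in V_h}|u-v_h|_{H^1_h}\le Ch^{\min\{r,m\}}|u|_{H^{r}(\Omega)}$; for $u$ as regular as $H^{m+1}(\Omega)$ this specializes to the approximation property quoted at the end of Section~5.

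Next I would treat the consistency term. Integrating by parts element by element and using $-\Delta u=f$ gives $a_h(u,w_h)-(f,w_h)=\sum_{e\in\mathcal E_h}\int_e\frac{\partial u}{\partial n}[w_h]\ds$, which is exactly the expression estimated in the theorem immediately preceding this one; by that theorem together with the Remark following it (which records that the same argument applies to $R_{h,0}$, $ER_{h,0}^M$ and $R^+_{h,0}$), this sum is bounded by $Ch^{\min\{r,m\}}|u|_{H^{r}(\Omega)}|w_h|_{H^1_h}$ for every one of the four spaces. Substituting the two bounds into the Strang inequality and dividing out $|w_h|_{H^1_h}$ finishes the estimate; the two cases $m=2k-1$ and $m=2k$ in the statement differ only in which family's shape functions, interpolation operator and consistency lemma are invoked.

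The step I expect to be the main obstacle is the low-regularity regime, i.e.\ when $u$ is close to the minimal admissible smoothness (the normal-derivative traces $\partial u/\partial n|_e$ must still be defined). The consistency lemmas of Section~6 (Lemmas~\ref{Lemma4.1}--\ref{Lemma4.2}) and their scaled counterparts are proved for $\hat u\in H^m(\hK)$, and to cover the full range of~$r$ one must extend them to $\hat u\in H^{s}(\hK)$ for every admissible $s\le m+1$. This is routine: the relevant edge functional is a bounded bilinear form on $H^{s}(\hK)\times ER_m(\hK)$ (and likewise on $R_m(\hK)$, $R_m^+(\hK)$) that continues to annihilate $P_{m-1}(\hK)$ in its first slot --- the cancellations exploited in those lemmas, e.g.\ \eqref{3.10}--\eqref{3.13}, use no regularity of $\hat u$ --- so a Bramble--Hilbert estimate (or real interpolation between Sobolev spaces) delivers the bound with the correct power of $h$. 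The same interpolation argument supplies the low-regularity version of the Section~5 approximation property used in the first step, and the proof is complete.
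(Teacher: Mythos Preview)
Your proposal is correct and follows exactly the route the paper takes: the paper's proof consists of the single sentence ``It is standard, by applying the Strang lemma, cf.\ \cite{Ciarlet(1978)}'', and you have simply spelled out the two ingredients (the Section~5 approximation bound and the preceding consistency theorem) that feed into that lemma.
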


\begin{proof}  It is standard, by applying the Strang lemma,
    cf. \cite{Ciarlet(1978)}.
\end{proof}
	
\section{Numerical test}

We compute the $P_k$ nonconforming finite element
 solutions on uniform rectangular grids for the
 following Poisson equation:
 \a{ - \Delta u &= f \quad\hbox{ in } \ \Omega=(0,1)^2,\\
         u &=0\quad\hbox{ on } \ \partial\Omega.}
In computation, the exact solution is,
         \an{\lab{s} u(x,y)=2^4 (x-x^6)(y-y^2). }
The first level grid is the unit square.
Each subsequent grid is a refinement of the last one by
  dividing each square into 4, denoted by $\{{\mathcal T}_h\}$.
We use the following 6 nonconforming finite elements:
\an{\lab{V-3E} { \ } &&
 V^{(3E)}_h&=\{v\in L^2 \ \mid \
        v|_K\in P_3\oplus\{xy^3-x^3y, x^{4}-y^4\}, \
        [v]_e\perp_P P_2 \},  &{ \ } &{ \ }  \\
 \lab{V-4}&&
 V^{(4)}_h&=\{v\in L^2 \ \mid \
        v|_K\in P_4\oplus\{xy^4,x^4y\}, \
        [v]_e\perp P_3 \},\\
 \lab{V-5}&&
 V^{(5)}_h&=\{v\in L^2 \ \mid \
        v|_K\in P_5\oplus\{xy^5\}, \
        [v]_e\perp P_4 \},\\
  \lab{V-5E}&&
  V^{(5E)}_h&=\{v\in L^2 \ \mid \
        v|_K\in P_5\oplus\{xy^5-x^5y, x^6-y^6\}, \
        [v]_e\perp_P P_4\},\\
 \lab{V-6}&&
 V^{(6)}_h&=\{v\in L^2 \ \mid \
        v|_K\in P_6\oplus\{xy^6,x^6y\}, \
        [v]_e\perp P_5 \},\\
 \lab{V-7}&&
 V^{(7)}_h&=\{v\in L^2 \ \mid \
        v|_K\in P_7\oplus\{xy^7\}, \
        [v]_e\perp P_6 \},
 }
   where $K$ is any square element in ${\mathcal T}_h$ and
    $e$ is any edge in the grid ${\mathcal T}_h$.
To be precise,
  \a{  V^{(3E)}_h&= ER^P_{h,0}, &&\text{defined in \meqref{ER-P-h-0}
           with $m=3$, }\\
        V^{(4)}_h&=  R^+_{h,0}, &&\text{defined in \meqref{R-h-p-0}
           with $m=4$, }\\
        V^{(5)}_h&= R_{h,0}, &&\text{defined in \meqref{R-h-0}
           with $m=5$, }\\
        V^{(5E)}_h&= ER^P_{h,0}, &&\text{defined in \meqref{ER-P-h-0}
           with $m=5$, }\\
        V^{(6)}_h&=  R^+_{h,0}, &&\text{defined in \meqref{R-h-p-0}
           with $m=6$, }\\
         V^{(7)}_h&= R_{h,0}, &&\text{defined in \meqref{R-h-0}
           with $m=7$. }
	    }
 We list the computation results in Tables \mref{b-3}--\mref{b-7}.
In all cases except the case of polynomial degree 3,	
  we stop the computation when the machine accuracy is
  reached, i.e., the relative error of computed solutions is
   about $10^{-15}$.
We can see,  for all the finite element spaces,   the
computational solutions converge at the optimal
  order of rate.  This is proved in paper.

 \begin{table}[htb]
  \caption{ \lab{b-3} The error and order of convergence
     by the 2D $C^{-1}$-$P_3$ element \meqref{V-3E} for \meqref{s}.}
\begin{center}  \begin{tabular}{c|rr|rr}  
\hline &  $ \| u - u_h\|_{L^2}$&$h^r$ &
   $| u - u_h|_{H^1_h}$&$h^r$   \\ \hline
 1&  0.000000000&0.0&  0.00000000&0.0\\
 2&  0.172089821&0.0&  1.15734862&0.0\\
 3&  0.012510804&3.8&  0.16038663&2.9\\
 4&  0.000823397&3.9&  0.02155950&2.9\\
 5&  0.000052434&4.0&  0.00280349&2.9\\
 6&  0.000003300&4.0&  0.00035752&3.0\\
 7&  0.000000207&4.0&  0.00004514&3.0\\
 8&  0.000000013&4.0&  0.00000567&3.0\\
   \hline
\end{tabular}\end{center} \end{table}

 \begin{table}[htb]
  \caption{ \lab{b-4} The error and order of convergence
     by the 2D $C^{-1}$-$P_4$ element \meqref{V-4} for \meqref{s}.}
\begin{center}  \begin{tabular}{c|rr|rr}  
\hline &  $ \| u - u_h\|_{L^2}$&$h^r$ &
   $| u - u_h|_{H^1_h}$&$h^r$   \\ \hline
 1&  1.520882827023&0.0& 10.1100205010&0.0\\
 2&  0.073186215065&4.4&  0.9404045783&3.4\\
 3&  0.002467158503&4.9&  0.0655417961&3.8\\
 4&  0.000078057209&5.0&  0.0042062381&4.0\\
 5&  0.000002441049&5.0&  0.0002645249&4.0\\
 6&  0.000000076219&5.0&  0.0000165552&4.0\\
 7&  0.000000002381&5.0&  0.0000010349&4.0\\
   \hline
\end{tabular}\end{center} \end{table}

 \begin{table}[htb]
  \caption{ \lab{b-5} The error and order of convergence
     by the 2D $C^{-1}$-$P_5$ element \meqref{V-5} for \meqref{s}.}
\begin{center}  \begin{tabular}{c|rr|rr}  
\hline &  $ \| u - u_h\|_{L^2}$&$h^r$ &
   $| u - u_h|_{H^1_h}$&$h^r$   \\ \hline
 1&  0.162340154&0.0&  0.94025897&0.0 \\
 2&  0.003652811&5.5&  0.05082935&4.2 \\
 3&  0.000061390&5.9&  0.00176166&4.9 \\
 4&  0.000000983&6.0&  0.00005729&4.9 \\
 5&  0.000000016&6.0&  0.00000182&5.0 \\
 6&  0.000000000&6.0&  0.00000006&5.0 \\
   \hline
\end{tabular}\end{center} \end{table}

 \begin{table}[htb]
  \caption{ \lab{b-5E} The error and order of convergence
     by the 2D $C^{-1}$-$P_5$ element \meqref{V-5E} for \meqref{s}.}
\begin{center}  \begin{tabular}{c|rr|rr}  
\hline &  $ \| u - u_h\|_{L^2}$&$h^r$ &
   $| u - u_h|_{H^1_h}$&$h^r$   \\ \hline
 1&  0.160588210&0.0&  0.93096655&0.0\\
 2&  0.003712721&5.4&  0.05204506&4.2\\
 3&  0.000062480&5.9&  0.00180745&4.8\\
 4&  0.000000999&6.0&  0.00005869&4.9\\
 5&  0.000000016&6.0&  0.00000186&5.0\\
 6&  0.000000000&6.0&  0.00000006&5.0\\
   \hline
\end{tabular}\end{center} \end{table}

 \begin{table}[htb]
  \caption{ \lab{b-6} The error and order of convergence
     by the 2D $C^{-1}$-$P_6$ element \meqref{V-6} for \meqref{s}.}
\begin{center}  \begin{tabular}{c|rr|rr}  
\hline &  $ \| u - u_h\|_{L^2}$&$h^r$ &
   $| u - u_h|_{H^1_h}$&$h^r$   \\ \hline
1&  0.051014969254&0.0&  0.4290029114&0.0 \\
2&  0.000428314992&6.9&  0.0080196383&5.7 \\
3&  0.000003402349&7.0&  0.0001296547&6.0 \\
4&  0.000000026614&7.0&  0.0000020505&6.0 \\
5&  0.000000000209&7.0&  0.0000000322&6.0 \\
6&  0.000000000021&3.3&  0.0000000012&4.8 \\
  \hline
\end{tabular}\end{center} \end{table}

 \begin{table}[htb]
  \caption{ \lab{b-7} The error and order of convergence
     by the 2D $C^{-1}$-$P_7$ element \meqref{V-7} for \meqref{s}.}
\begin{center}  \begin{tabular}{c|rr|rr}  
\hline &  $ \| u - u_h\|_{L^2}$&$h^r$ &
   $| u - u_h|_{H^1_h}$&$h^r$   \\ \hline
 1&  0.012193263916&0.0&  0.1177979140&0.0 \\
 2&  0.000046859707&8.0&  0.0009292948&7.0 \\
 3&  0.000000182695&8.0&  0.0000072707&7.0 \\
 4&  0.000000000839&7.8&  0.0000000571&7.0 \\
   \hline
\end{tabular}\end{center} \end{table}

\end{document}